\numberwithin{equation}{section}
\newtheorem{theorem}{Theorem}[section]
\newtheorem{lemma}[theorem]{Lemma}
\newtheorem{proposition}[theorem]{Proposition}
\newtheorem{claim}[theorem]{Claim}
\theoremstyle{definition}
\theoremstyle{remark}
\newtheorem{remark}[theorem]{Remark}
\newcommand{\BF}{{\mathbb{F}}}
\newcommand{\BN}{{\mathbb{N}}}
\newcommand{\FB}{{\mathfrak{B}}}
\newcommand{\FL}{{\mathfrak{L}}}
\newcommand{\CA}{{\mathcal{A}}}
\newcommand{\tbA}{{\textbf{A}}}
\newcommand{\ind}{{\mathbbm{1}}}
\newcommand{\si}{{\sigma}}
\title{Random Steiner systems and bounded degree coboundary expanders of every dimension}
\author{Alexander Lubotzky\footnote{Research supported in part by the ISF and NSF.}$~^{,\dag}$,\,\, Zur Luria\footnote{Supported by Dr.~Max R\"ossler, the Walter Haefner Foundation and the ETH Foundation} \,\,and\,\, Ron Rosenthal\footnote{Partially supported by an ETH fellowship.}}
\date{} 
\begin{document}
	\maketitle

\begin{abstract}
We introduce a new model of random $d$-dimensional simplicial complexes, for $d\geq 2$, whose $(d-1)$-cells have bounded degrees. We show that with high probability, complexes sampled according to this model are coboundary expanders. The construction relies on Keevash's recent result on designs \cite{Ke14}, and the proof of the expansion uses  techniques developed by Evra and Kaufman in \cite{EK15}. This gives a full solution to a question raised in \cite{DK12}, which was solved in the two-dimensional case by Lubotzky and Meshulam \cite{LM13}.
\end{abstract}


\section{Introduction}

The concept of expansion in graphs has proven to be extremely useful in both theoretical and practical applications. Given $\varepsilon>0$, a finite graph $G=(V,E)$ is called an $\varepsilon$-expander, if for every set $S \subseteq V$ whose size is at most $|V|/2$ there holds
\begin{equation}
	|\{e \in E~:~ e \cap S = 1\}| \geq \varepsilon |S|.
\end{equation}
For an introduction to this vast topic, see \cite{Ch97,HLW06,Lub10} and the references therein.

A \textit{simplicial complex} is a natural topological and combinatorial generalization of the notion of graphs. The success of expander graphs has prompted researchers to ask: what does it mean for a simplicial complex to be an expander? Several definitions have been proposed and much work has been done on elucidating the relations between these definitions as well as for presenting constructions of high dimensional expanders, c.f. \cite{Li04,LSV05,LM06,MW09,Gr10,FGLNP10,MW11,Wa11,DK12,PRT12,PR12,HJ13Th,MS13,Par13,GS14,KKL14,
SKM12,Go13,EGL14,LMM14,DKW15,Ro14,CMRT14,Op14,Ev15,KR15}. For a survey on some of these works see \cite{Lu14}.

This paper focuses on \textit{coboundary expansion}, a concept that came up independently in the work of Linial and Meshulam \cite{LM06}, where the homology groups of random complexes analogous to Erd\H{o}s-R{\'e}nyi graphs were studied, and in Gromov's work on topological expansion \cite{Gr10}. Meshulam and Wallach \cite{MW09} calculated the coboundary expansion of the complete simplicial complex, and found the threshold for the random simplicial complexes defined in \cite{LM06} to be coboundary expanders. Their work  implies the existence of coboundary expanders whose $(d-1)$-cells have logarithmic degrees in the number of vertices. Dotterrer and Kahle \cite{DK12} asked whether there exist coboundary expanders whose $(d-1)$-cells have bounded degrees. Indeed, in the case of graphs, most of the work on expanders has focused on expanders with bounded degrees, which makes this a very natural question.

As a partial answer, Lubotzky and Meshulam \cite{LM13} presented a model of random $2$-dimensional complexes whose $1$-cells have bounded degrees and are with high probability coboundary expanders. Their model made use of random \textit{Latin squares}, which are combinatorial objects closely related to \textit{designs}. 

In this paper we present a new model of random $d$-dimensional simplicial complexes whose $(d-1)$-cells have bounded degrees. Our model is based on \textit{Steiner systems} which are specific types of \textit{designs}.

Informally, given $k\in\BN$, we define $X$ to be the union of $k$ Steiner systems, chosen randomly and independently according to a certain distribution (see Subsection  \ref{subsec:The_model_and_results} for further details). 

Our main result is that for every $d\geq 2$, there exists $k_0= k_0(d)\in\BN$ such that for every $k\geq k_0$, the complex $X$ is a coboundary expander with high probability. 

Being coboundary expanders, the complexes are also \textit{topological expanders}, i.e. they satisfy Gromov's topological overlapping property (see \cite{Gr10,DKW15}). These are the first known coboundary expanders of dimension $d\geq 3$ (for $d=2$ see \cite{LM13}) of upper bounded degree, i.e., complexes in which the codimension 1 cells have a uniformly bounded degree. It is still an open problem whether there exist complexes which are coboundary expanders in which all the cells are of a uniformly bounded degree. For $d=2$, it is shown in \cite{KKL14}, that such 2-complexes do exist if one accepts an unproven conjecture of Serre on the congruence subgroup problem. Such bounded degree topological expanders do exist, see \cite{KKL14} for $d=2$ and \cite{EK15} for general dimension. 


\section{Results}

\subsection{Preliminaries}\label{sec:definitions}

Let $X$ be a finite simplicial complex with vertex set $[n]:=\{1,2,\ldots,n\}$. This means that $X$ is a finite collection of subsets of $[n]$, called cells, which is closed under inclusion, i.e., if $\tau\in X$ and $\sigma\subseteq\tau$, then $\sigma\in X$. The dimension of a cell $\sigma$ is $|\sigma|-1$, and $X^{j}$ denotes the set of $j$-cells (cells of dimension $j$) for $j\geq -1$. The dimension of $X$, which we denote by $d$, is the maximal dimension of a cell in it. We use the abbreviation $d$-complex for a simplicial complex of dimension $d$. Given a $d$-complex $X$ and $-1\leq j\leq d$, we define the $j$-th skeleton of $X$, denoted $X^{(j)}$, to be the set of cells in $X$ of dimension at most $j$, that is $X^{(j)} := \bigcup_{i=-1}^{j} X^i$. All of the $d$-complexes considered in this paper will have a complete skeleton, by which we mean that they contain all subsets of $[n]$ whose size is at most $d$. For a $(j+1)$-cell $\tau=\{\tau_0,\ldots,\tau_{j+1}\}$, its boundary $\partial\tau$ is defined to be the set of $j$-cells $\{\tau\backslash\{\tau_i\}\}_{i=0}^{j+1}$. The degree of a $j$-cell $\sigma$, denoted $\deg(\sigma)$, is defined to be the number of $(j+1)$-cells $\tau$ which contain $\sigma$ in their boundary.

For $j\geq -1$, let $C^j(X;\BF_2)$ denote the space of $\BF_2$-valued functions on $X^j$. The elements of $C^j$ are also called cochains. Using the natural bijection between elements of $C^j(X;\BF_2)$ and subsets of $X^j$ given by $A\subseteq X^j \leftrightarrow \ind_A \in C^j(X;\BF_2)$, we will use a slight abuse of notation and write $A\in C^j(X;\BF_2)$ for $A\subseteq X^j$.

The $j$-th coboundary map  $\delta^X_j:C^j(X;\BF_2)\to C^{j+1}(X;\BF_2)$ of the $d$-complex $X$ is given by 
\begin{equation}
	\delta_j^X A = \{\tau\in X^{j+1} ~:~ |\partial \tau \cap A| \text{ is odd}\},\quad \text{ for } A\in C^j(X;\BF_2).
\end{equation}
We will usually omit the indexes $j$ and $X$ from the notation when no confusion may occur.  In particular, $\delta$ means $\delta^X_{d-1}$ unless otherwise stated. 

Denote for $j\geq 0$, $Z^j(X;\BF_2)=\ker(\delta_j)$ and $B^j(X;\BF_2)=\mathrm{im}(\delta_{j-1})$, the spaces of $j$-dimensional $\BF_2$-cocycles and $j$-dimensional $\BF_2$-coboundaries respectively. One can verify that $(C^j,\delta_j)$ is a cochain complex, that is $B^j\subseteq Z^j$ for every $j\geq 0$. The $j$-th reduced $\BF_2$-cohomology of $X$ is $\widetilde{H}^j(X;\BF_2) = Z^j(X;\BF_2) / B^j(X;\BF_2)$. For a cochain $A\in C^j(X;\BF_2)$, let $\left[A\right]$ denote the equivalence class of $A$ under the projection from $C^j(X;\BF_2)$ to $C^j(X;\BF_2)/B^j(X;\BF_2)$. 

Following \cite{KKL14,EK15}, we define the weighted norm $\|\cdot\|^j$ on $C^j(X;\BF_2)$  by 
\begin{equation}\label{eq:the_norm}
	\|A\|^j := \sum_{\si\in A}w(\si),\qquad\text{ where }  \qquad 	w(\si):=\frac{|\{\tau\in X^d ~:~\si\subseteq \tau\}|}{\binom{d+1}{|\si|}|X^d|}.
\end{equation}

The norm above is not the one usually defined on $C^j(X;\BF_2)$, that is the counting norm $A\mapsto |A|$, but it has several advantages: it is always bounded by $1$, it induces a probability measure on $X^j$, i.e. $\sum_{\si\in X^j}w(\si)=1$, it makes it easier to compare the norm of cochains of different dimension and it simplifies the comparison of dense complexes versus sparse complexes. We will usually abbreviate the notation by writing $\|\cdot\|$ instead of $\|\cdot\|^j$. The induced norm on the space of equivalence classes is defined by 
\begin{equation}
	\|[A]\| = \min\{\|B\| ~:~[B]=[A]\},\quad \forall A\in C^j(X;\BF_2).
\end{equation}
In particular $\|[A]\|=0$ if and only if $A\in B^j(X;\BF_2)$. 

For a cochain $A\in C^j(X;\BF_2)/ B^j(X;\BF_2)$ we define its expansion by
\begin{equation}
	h_j(A)=\frac{\|\delta_jA\|}{\|[A]\|}.
\end{equation}
Note that a cochain's expansion is constant on equivalence classes. The $j$-th coboundary expansion constant of $X$ is defined to be the minimum of the expansion among all cochains in $C^j(X;\BF_2)/ B^j(X;\BF_2)$, i.e.
\begin{equation}
	h_j(X) = \min\{h_j(A) ~:~ A\in C^j(X;\BF_2)/ B^j(X;\BF_2)\}.
\end{equation}
A $d$-dimensional complex $X$ is called a $(j,k,\varepsilon)$-coboundary expander if 
\begin{equation}
	\max_{\sigma\in X^{j-1}}\mathrm{deg}(\sigma)\leq k \qquad \text{and} \qquad h_{j-1}(X)\geq \varepsilon.
\end{equation}

\begin{remark}\label{rem:comparing_norms}
If $X$ is a $d$-complex such that $1\leq \deg(\si)\leq k$ for all $\si\in X^{d-1}$, then 
the definition of $(d,k,\varepsilon)$-coboundary expansion is equivalent to 
\begin{equation}\label{eq:original_coboundary_expansion}
	|\delta_{d-1} A| \geq \widetilde{\varepsilon} \cdot \min \left\{|B|~:~ [B]=[A]\right\},\qquad \forall A\in C^{d-1}(X;\BF_2)
\end{equation}
for some $\varepsilon/(d+1)\leq \widetilde{\varepsilon}\leq k\varepsilon/(d+1)$. The inequality \eqref{eq:original_coboundary_expansion} is the original definition of coboundary expansion, see \cite{LM06}. 
\end{remark}

Given  $\rho \in X$, the link of $\rho$ in $X$ is a simplicial complex of dimension $d-|\rho|$ on the vertex set $[n] \setminus \rho$, defined by 
\begin{equation}
	X_\rho := \{\sigma \subseteq ([n] \setminus \rho)~:~\rho \cup \sigma \in X\}. 
\end{equation} 
In addition, let $\delta_\rho:= \delta^{X_\rho}_{d-|\rho|-1} : C^{d-|\rho|-1}(X_\rho;\BF_2) \to C^{d-|\rho|}(X_\rho;\BF_2)$ be the top coboundary operator on $X_\rho$. For $-1 \leq j\leq d-|\rho|$, we will denote by $\|\cdot\|_\rho^j$, and abbreviate $\|\cdot\|_\rho$, the norm defined by \eqref{eq:the_norm} on the space $C^j(X_\rho;\BF_2)$.

\textbf{Remark regarding notation:} Throughout this paper small Greek letters (except for $\sigma,\tau$ and $\rho$) as well as the letter $c$ are used to denote positive constants that might depend on certain parameters. The notation $c =c(d,k)$ is used to state that $c$  depends only on $d$ and $k$. The Greek letter $\tau,\sigma$ and $\rho$ are used to denote cells in a complex.


\subsection{A general strategy for proving coboundary expansion}

The goal of this paper is to introduce (for every fixed  $d\geq 2$ and sufficiently large $k\in\BN$) a model of random $d$-complexes which are with high probability $(d,k,\varepsilon)$-coboundary expanders, for some positive $\varepsilon>0$. The general philosophy of the proof follows Lubotzky and Meshulam \cite{LM13}, that is, we consider separately expansion for small cochains, i.e. cochains $A\in C^{d-1}(X;\BF_2)$ such that $\|[A]\|\leq c$ for some small fixed constant $c>0$, and the remaining cochains, which are called large cochains. 

In a recent paper \cite{EK15}, Evra and Kaufman gave sufficient conditions for the coboundary expansion of small cochains.  

\begin{theorem}[\cite{EK15} Theorem 4.3]\label{thm:Shai_Tali} Given $d\geq 2$ and $\beta>0$, there exist constants $\overline{\gamma}=\overline{\gamma}(d,\beta)>0$, $c_0=c_0(d,\beta)>0$ and $\varepsilon_0=\varepsilon_0(d,\beta)>0$ such that the following holds: Let $Y$ be a $d$-dimensional complex\footnote{not necessarily with a complete skeleton.} satisfying: 
\begin{enumerate}[(a)]
\item For every $\emptyset \neq \rho\in Y$, the link $Y_\rho$ satisfies $h_{d-|\rho|-1}(Y_\rho)\geq \beta$.
\item For any $\rho\in Y$, the $1$-skeleton of the link $Y_\rho$, i.e. $Y_\rho^{(1)}$, satisfies 
\begin{equation}\label{eq:SK_condition_b}
	\|E_\rho(A,B)\|_\rho \leq 4\Big(\|A\|_\rho \|B\|_\rho + \overline{\gamma}\sqrt{\|A\|_\rho \|B\|_\rho}\,\Big),\quad \forall A,B \subseteq Y_\rho^0, 
\end{equation}
where $E_\rho(A,B)\subseteq Y_\rho^1$ is the set of edges in $Y_\rho^{(1)}$ with one vertex in $A$ and one vertex in $B$.
\end{enumerate}
Then, 
\begin{equation}
	\|\delta A\| \geq \varepsilon_0 \|[A]\|,\quad \forall A \in C^{d-1}(Y;\BF_2) \text{ satisfying } \|[A]\|\leq c_0.
\end{equation}
\end{theorem}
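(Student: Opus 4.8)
I would prove this by a \emph{local‑to‑global} argument in the spirit of Garland. Fix $A\in C^{d-1}(Y;\BF_2)$ with $\|A\|=\|[A]\|\le c_0$ (a minimal representative) and, assuming for contradiction that $\|\delta A\|<\varepsilon_0\|A\|$, try to produce a cochain $g$ with $\|A+\delta g\|<\|A\|$. For each vertex $v$ set $A_v\in C^{d-2}(Y_v;\BF_2)$, $A_v(\sigma):=A(\{v\}\cup\sigma)$, and let $A_{\widehat v}\in C^{d-1}(Y_v;\BF_2)$ be the restriction of $A$ to the top cells of the link $Y_v$. Expanding the boundary of a $d$-cell $\{v\}\cup\rho$ gives the pointwise identity $(\delta A)_v=A_{\widehat v}+\delta_{\{v\}} A_v$, while unwinding the weights \eqref{eq:the_norm} yields
\begin{equation}
  \sum_{v} w(\{v\})\,\|A_v\|_v \;=\; \|A\| \;=\; \sum_{v} w(\{v\})\,\|A_{\widehat v}\|_v,\qquad \sum_{v} w(\{v\})\,\|(\delta A)_v\|_v \;=\; \|\delta A\|.
\end{equation}
(One could also induct on $d$, passing to the $(d-1)$-dimensional links; but the genuinely new feature is already present in low dimensions, and hypothesis (a) with $|\rho|=d-1$ turns the one-dimensional links into honest expander graphs.)

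The first structural input is hypothesis (a) with $\rho=\{v\}$: each link $Y_v$ is a coboundary expander with constant $\beta$, so $\|\delta_{\{v\}} A_v\|_v\ge\beta\,\|[A_v]\|_v$. Combined with $\|\delta_{\{v\}} A_v\|_v\le\|(\delta A)_v\|_v+\|A_{\widehat v}\|_v$ and the identities above, this gives $\sum_v w(\{v\})\,\|[A_v]\|_v\le\beta^{-1}\big(\|\delta A\|+\|A\|\big)$, so under the contradiction hypothesis the localizations $A_v$ are, on weighted average, $O_\beta(\|A\|)$-close to coboundaries in their links. A Markov/dyadic sorting (this is where $\|A\|\le c_0$ is essential, to make the exceptional vertices carry negligible weight) then provides, for all but a tiny-weight set of $v$, a primitive $f_v\in C^{d-3}(Y_v;\BF_2)$ with $\|A_v+\delta_{\{v\}} f_v\|_v$ small. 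One then \emph{cones} each such $f_v$ from $v$ to get $\widetilde f_v\in C^{d-2}(Y;\BF_2)$ and sets $g:=\sum_v\widetilde f_v$. Since $(\delta\widetilde f_v)_v=\delta_{\{v\}} f_v$, the localization at $v$ of $A+\delta g$ collapses $A_v$ to its small residue, while for $w\ne v$ the cochain $(\delta\widetilde f_v)_w$ is supported only on top cells of $Y_w$ passing through $v$. Using the weight identities once more, the whole matter reduces to bounding the total \emph{interaction error} $\sum_w w(\{w\})\,\big\|\sum_{v\ne w}(\delta\widetilde f_v)_w\big\|_w$.

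Bounding this interaction error is the step I expect to be the main obstacle, and it is precisely what hypothesis (b) is for. The support of $\sum_{v\ne w}(\delta\widetilde f_v)_w$ lies among the cells of $Y_w$ spanned by the "active" vertices $v$ (those with $f_v\ne0$), so the error in a given link $Y_w$ is controlled by an $E_w(P,Q)$-type quantity for vertex sets $P,Q\subseteq Y_w^0$ of small weighted size; the mixing estimate \eqref{eq:SK_condition_b} then upgrades the trivial bound $\sim\|P\|_w+\|Q\|_w$ to $4\big(\|P\|_w\|Q\|_w+\overline\gamma\sqrt{\|P\|_w\|Q\|_w}\big)$, which in the small regime is $O(\overline\gamma)$ times a lower-order quantity, hence negligible compared to $\|A\|$ once $c_0$ and $\overline\gamma$ are small relative to $\beta$. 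The delicate points are (i) organizing the choice of the $f_v$'s so that this mixing bound applies cleanly and \emph{simultaneously} in every link, without errors from different vertices reinforcing one another (e.g.\ by first discarding a small-weight set of "dense" vertices), and (ii) keeping the several exceptional sets under control throughout the estimate. Once $\|A+\delta g\|<\|A\|$ is established, minimality is contradicted unless $\|A\|=0$, i.e.\ unless $A$ is already a coboundary; tracing the inequalities produces the advertised $\overline\gamma$, $c_0$, $\varepsilon_0$ as functions of $d$ and $\beta$.
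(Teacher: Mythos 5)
The first thing to note is that the paper contains no proof of this statement: Theorem \ref{thm:Shai_Tali} is imported verbatim from Evra--Kaufman \cite{EK15} and used as a black box, so your proposal has to be measured against the argument in \cite{EK15} itself.

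Measured that way, there is a genuine gap, and it sits at the heart of your scheme rather than in the technical step you flag. Your plan is to take a minimal representative $A$ and contradict minimality by building $g=\sum_v \widetilde f_v$ from coned link-primitives with $\|A+\delta g\|<\|A\|$, trading the ``gains'' $\|A_v\|_v-\|A_v+\delta_v f_v\|_v$ against interaction errors controlled by \eqref{eq:SK_condition_b}. But for a single vertex the cone produces \emph{no} error at all: if $f\in C^{d-3}(Y_v;\BF_2)$ and $vf$ denotes its cone, then $\delta(vf)$ vanishes on every $(d-1)$-cell not containing $v$ and equals $\delta_v f$ on the star of $v$; since $w(\{v\}\cup\rho)=d\,w(\{v\})\,w_v(\rho)$, one gets the exact identity $\|A+\delta(vf)\|-\|A\| = d\,w(\{v\})\bigl(\|A_v+\delta_v f\|_v-\|A_v\|_v\bigr)$. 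Hence a globally minimal $A$ is automatically \emph{locally minimal}: $\|[A_v]\|_v=\|A_v\|_v$ for every $v$, so each of your local corrections has gain exactly zero (and the summed correction only adds interaction terms on top of that). No choice of $f_v$'s, however cleverly organized, can produce $\|A+\delta g\|<\|A\|$, so the contradiction you aim for is unreachable. Relatedly, your averaging input is weaker than trivial: since $\sum_v w(\{v\})\|A_v\|_v=\|A\|$ exactly, the bound $\sum_v w(\{v\})\|[A_v]\|_v\le\beta^{-1}(\|\delta A\|+\|A\|)$ says nothing new whenever $\beta\le 1$, which is the regime the theorem must cover.

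What \cite{EK15} actually does is use the cone identity in the opposite direction: it serves only to reduce to locally minimal cochains, and the real content is a direct isoperimetric inequality for \emph{small, locally minimal} cochains, $\|\delta A\|\ge\varepsilon_0\|A\|$, proved by a descending induction through the skeleta (the ``fat machinery''): one tracks, dimension by dimension, the cells whose links see a non-negligible portion of $A$, and uses hypothesis (a) for links of \emph{all} dimensions (not only vertex links) together with the mixing estimate (b) in every link to show that fatness cannot propagate too much when $\|A\|\le c_0$, which is where $\overline\gamma$, $c_0$, $\varepsilon_0$ acquire their dependence on $d,\beta$. That inductive core is the hard part of the theorem, and it is precisely the part your sketch replaces with a correction scheme that cannot work.
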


Theorem \ref{thm:Shai_Tali} suggests a strategy for proving coboundary expansion of $d$-complexes. In order to state it some additional definitions are needed. Given a graph $G=(V,E)$, we denote by $\CA=D^{-1/2}\tbA D^{-1/2}$ its normalized adjacency matrix, where $D$ is the diagonal matrix whose entries are the degrees of the vertices and $\tbA$ is the standard adjacency matrix $\tbA_{v,w}=\ind_{\{v,w\}\in E}$. One can verify that the eigenvalues of $\CA$ are within the interval $[-1,1]$ and that $1$ is always an eigenvalue  with eigenfunction $v\mapsto \sqrt{\deg(v)}$. Denoting by $1=\lambda_1\geq \lambda_2\geq \ldots \geq \lambda_{|V|}$ the eigenvalues of $\CA$ in decreasing order, let $\lambda(G):=\max\{|\lambda_2|,|\lambda_{|V|}|\}$ be its second largest eigenvalue in absolute value.

\begin{theorem}[General strategy for proving coboundary expansion]\label{thm:strategy}
	Fix $d\geq 2$ and a function $\varphi:(0,1]\to (0,1]$. There exist positive constants $c_{d-3},c_{d-4},\ldots,c_{-1}$, $\overline{\lambda}$ and $\varepsilon$ depending only on $d$ and $\varphi$ such that the following holds. Let $X$ be a $d$-complex with vertex set $[n]$ and a complete $(d-1)$-skeleton. Assume further that
\begin{enumerate}[(a)]
	\item For any $-1\leq j\leq d-3$ and every $\rho\in X^j$ the complex $X_\rho$ satisfies
	\begin{equation}
		\|\delta_\rho A\|_\rho\geq \varphi(c_j)\|[A]\|_\rho,\quad \forall A\in C^{d-|\rho|-1}(X_\rho,\BF_2) \text{ such that } \|[A]\|_\rho\geq c_j.
	\end{equation}
	\item For every $\rho\in X^{d-2}$, $\lambda(X_\rho)\leq \overline{\lambda}$.
\end{enumerate}
Then, $h_{d-1}(X)\geq \varepsilon$. In particular, if $X$ also satisfies $\max_{\si\in X^{d-1}}\deg(\sigma)\leq k$, then $X$ is a $(d,k,\varepsilon)$-coboundary expander. 
\end{theorem}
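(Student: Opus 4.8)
The plan is to reduce the global coboundary expansion of $X$ to the hypotheses (a) and (b) via a downward induction on codimension, feeding everything into Theorem \ref{thm:Shai_Tali}. First I would dispose of large cochains. Fix $A\in C^{d-1}(X;\BF_2)$ with $\|[A]\|_X \geq c_{-1}$; I want $\|\delta A\|\geq \varepsilon'\|[A]\|$ for such $A$ with $\varepsilon'$ depending only on $d$ and $\varphi$. The natural tool is a localization/averaging identity relating $\|\delta_{d-1}^X A\|$ and $\|A\|$ to the restrictions $A_\rho$ of $A$ to links $X_\rho$ of vertices $\rho\in X^0$: since the skeleton is complete, each top cell of $X_\rho$ corresponds to a top cell of $X$ containing $\rho$, and averaging the link inequality of hypothesis (a) at level $j=0$ (i.e.\ $\|\delta_\rho A_\rho\|_\rho \geq \varphi(c_0)\|[A_\rho]\|_\rho$ whenever $\|[A_\rho]\|_\rho\geq c_0$) over $\rho\in X^0$ should give the claim, after checking that the weighted norms are compatible under restriction to links and that "enough" vertices $\rho$ have $\|[A_\rho]\|_\rho\geq c_0$ once $\|[A]\|_X$ is bounded below (this is where the constant $c_{-1}$ gets chosen in terms of $c_0$ and $d$). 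The hypothesis (a) for links $X_\rho$ with $\rho\in X^j$, $j\geq 0$, is not needed for large cochains of $X$ itself — it is there to run the same argument \emph{inside} each link, so that the links themselves satisfy a large-cochain bound, which is exactly what is needed to verify hypothesis (a) of Theorem \ref{thm:Shai_Tali}.

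Next I would verify the hypotheses of Theorem \ref{thm:Shai_Tali} for $Y=X$ with an appropriate $\beta=\beta(d,\varphi)$. Hypothesis (a) of that theorem asks that every link $X_\rho$ ($\emptyset\neq\rho\in X$) satisfy $h_{d-|\rho|-1}(X_\rho)\geq\beta$; I would prove this by downward induction on $|\rho|$ using exactly the argument of the previous paragraph applied inside $X_\rho$: its small-cochain expansion comes from Theorem \ref{thm:Shai_Tali} applied recursively to $X_\rho$ (whose links $(X_\rho)_{\rho'} = X_{\rho\cup\rho'}$ are again links of $X$), and its large-cochain expansion comes from hypothesis (a) of Theorem \ref{thm:strategy} at the appropriate level $j=|\rho|-1$. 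The constants $c_{d-3},\ldots,c_{-1}$ are chosen from the top down precisely so that at each codimension the small-cochain threshold $c_0^{\mathrm{link}}$ produced by Theorem \ref{thm:Shai_Tali} in a link of codimension $m$ is $\leq$ the large-cochain threshold $c_j$ allowed by hypothesis (a) at the matching level, closing the induction with no gap between the two regimes. For the base of the induction, links of dimension $0$ or $1$ are handled directly: a $1$-dimensional link is a graph, and hypothesis (b) ($\lambda(X_\rho)\leq\overline\lambda$ for $\rho\in X^{d-2}$) gives its edge-expansion via the expander mixing lemma.

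Hypothesis (b) of Theorem \ref{thm:Shai_Tali} — the quantitative "pseudorandom bipartite density" estimate \eqref{eq:SK_condition_b} on the $1$-skeleton of every link $X_\rho$ — is the place where hypothesis (b) of Theorem \ref{thm:strategy} does its work, but only for links $X_\rho$ with $\rho\in X^{d-2}$, which are genuine graphs; for higher-codimension links one passes to \emph{their} links of codimension $2$, again of the form $X_{\rho'}$ with $\rho'\in X^{d-2}$, and uses that the $1$-skeleton of $Y_\rho$ is covered by such. For a graph $G$ with $\lambda(G)\leq\overline\lambda$, the expander mixing lemma in the normalized-degree/weighted form states $\big|\,\|E(A,B)\| - \|A\|\,\|B\|\,\big| \leq \overline\lambda\sqrt{\|A\|\,\|B\|}$ (with the weights of \eqref{eq:the_norm}, which on a graph are just the degree-proportional vertex weights and uniform edge weights); choosing $\overline\lambda$ so that $4(\|A\|\|B\| + \overline\gamma\sqrt{\|A\|\|B\|})$ dominates $\|A\|\|B\| + \overline\lambda\sqrt{\|A\|\|B\|}$ — e.g.\ $\overline\lambda \leq \overline\gamma(d,\beta)$ and using $\|E(A,B)\|\leq \|A\|\|B\| + \overline\lambda\sqrt{\cdots} \leq 4(\cdots)$ — yields \eqref{eq:SK_condition_b}. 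Here $\overline\gamma = \overline\gamma(d,\beta)$ is supplied by Theorem \ref{thm:Shai_Tali} and $\beta=\beta(d,\varphi)$ was fixed above, so $\overline\lambda$ indeed depends only on $d$ and $\varphi$.

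The main obstacle I anticipate is the bookkeeping of the two-regime decomposition and the compatibility of the weighted norms under passage to links. Concretely: (i) one must show that restricting a $(d-1)$-cochain $A$ on $X$ to a link $X_\rho$ ($\rho\in X^0$) and the weighted norms $\|\cdot\|_X$, $\|\cdot\|_\rho$ interact so that a weighted average over $\rho$ of $\|\delta_\rho A_\rho\|_\rho$ controls $\|\delta_{d-1}^X A\|$ from below, and similarly that $\|[A]\|_X$ is controlled from above by the average of $\|[A_\rho]\|_\rho$ (this second direction is subtler because the coboundary subspace $B^{d-1}$ of $X$ does not restrict onto that of $X_\rho$ cell-for-cell); (ii) once small and large cochains are each handled, there is still the "medium" range $c_0 \leq \|[A]\|_X < c_{-1}$, or rather one must ensure the thresholds are arranged so that \emph{every} cochain falls into one of the two handled cases — this is why $c_{-1}$ (the global large-cochain threshold) must be set equal to, or below, the small-cochain threshold $c_0$ coming out of Theorem \ref{thm:Shai_Tali} for $X$ itself, and the entire chain of constants $\overline\lambda, \beta, \overline\gamma, c_0, \ldots, c_{d-3}$ must be chosen in the correct dependency order. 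Getting this ordering right, and verifying that the link of a link is a link, is the technical heart; the rest is routine averaging and the expander mixing lemma.
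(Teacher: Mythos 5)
Your overall skeleton is the same as the paper's: a downward induction over codimension in which each link $X_\rho$ gets its small-cochain expansion from Theorem \ref{thm:Shai_Tali} and its large-cochain expansion from hypothesis (a), with the thresholds $c_j$ chosen to be exactly the constants $c_0(d-j-1,\beta_{j+1})$ supplied by Theorem \ref{thm:Shai_Tali} (so there is no ``medium range''), and with the codimension-$2$ links handled through the spectral hypothesis. However, your opening step is a genuine misstep. You propose to prove the large-cochain bound for $X$ itself by averaging hypothesis (a) at level $j=0$ over vertex links. First, this is unnecessary: $\emptyset\in X^{-1}$ and $X_\emptyset=X$, $\delta_\emptyset=\delta_{d-1}$, so hypothesis (a) at $j=-1$ \emph{is} the large-cochain bound for $X$ --- that is precisely why the range of $j$ starts at $-1$, and the paper simply invokes it (exactly as you yourself do inside the induction at level $j=|\rho|-1$). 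Second, as an argument it has a real gap at the point you yourself flag as ``subtler'': from $\|[A]\|\geq c_{-1}$ you would need many vertices $\rho$ with $\|[A_\rho]\|_\rho\geq c_0$, i.e.\ a lower bound on the distance of the restricted class to the coboundaries of the link. This localization of $\|[\cdot]\|$ is not a routine double-counting identity; it is the hard local-to-global content that Theorem \ref{thm:Shai_Tali} exists to supply (for small cochains), and no elementary averaging yields it. Delete that step and use (a) at $j=-1$ directly.

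Your verification of condition (b) of Theorem \ref{thm:Shai_Tali} also needs repair for links of dimension at least $2$: ``passing to their codimension-$2$ links'' does not verify \eqref{eq:SK_condition_b} for the $1$-skeleton of $Y_\rho$ itself, which is what that condition demands. The correct (and easy) reason, used in the paper, is the complete $(d-1)$-skeleton: for $\sigma=\rho\cup\rho'\in X^{(d-3)}$ the $1$-skeleton of $X_\sigma$ is the complete graph on $n-|\sigma|$ vertices, which satisfies \eqref{eq:SK_condition_b} even with $\overline{\gamma}=0$; for $\sigma\in X^{d-1}\cup X^{d}$ the $1$-skeleton has no edges and the condition is vacuous; only $\sigma\in X^{d-2}$ requires the spectral hypothesis plus the Expander Mixing Lemma. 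On that last point, in the normalization of \eqref{eq:the_norm} the mixing lemma gives $\|E(A,B)\|\leq 2\bigl(\|A\|\|B\|+\overline{\lambda}\sqrt{\|A\|\|B\|}\bigr)$ --- note the factor $2$, since $w(v)=\deg(v)/(2|E|)$ while $w(e)=1/|E|$ --- not the form you wrote; this is still within the factor $4$ of \eqref{eq:SK_condition_b} once $\overline{\lambda}\leq 2\overline{\gamma}$, and taking additionally $\overline{\lambda}<1/2$ gives the base case $h_0(X_\rho)\geq 1/4$ for $\rho\in X^{d-2}$ via Cheeger (or your mixing-lemma variant). With these two repairs your argument coincides with the paper's proof.
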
 

\begin{proof}
	The proof follows by induction on the following hypothesis:

\begin{center}	 
There exists $\varepsilon_j=\varepsilon_j(d,\varphi)>0$ such that for all \\
$\rho \in X^j$, the link $X_\rho$ satisfies $h_{d-|\rho|-1}(X_\rho)\geq \varepsilon_j$,
\end{center}
by letting $j$ run from $d-2$ to $-1$. Indeed, the case $j=-1$ gives the result with $\varepsilon=\varepsilon_{-1}$.

	Starting with the case $j=d-2$, assume $\overline{\lambda}<1/2$, and note that for every $\rho\in X^{d-2}$, the link $X_\rho$ is a graph and is thus equal to its 1-skeleton. Due to assumption $(b)$, it is also a spectral expander relative to $\mathcal{A}$. Consequently, by the Cheeger inequality \cite{AM85} (see \cite[Theorem 1]{Chu07} for a version related to $\mathcal{A}$) we have $h_0(X_\rho)\geq (1-\overline{\lambda})/2>1/4$, so $h_0(X_\rho)\geq \varepsilon_{d-2}$ with $\varepsilon_{d-2}=1/4$. 
	
	Assuming the statement holds for $j+1,j+2,\ldots,d-2$, we turn to prove it for $j$. Let $\rho\in X^j$. We will apply Theorem \ref{thm:Shai_Tali} to $Y=X_\rho$. Due to the induction hypothesis we know that condition $(a)$ of Theorem \ref{thm:Shai_Tali} holds with $\beta_{j+1}=\min\{\varepsilon_{d-2},\ldots,\varepsilon_{j+1}\}$, which only depends on $d$ and $\varphi$. Furthermore, we claim that for every $\rho'\in X_\rho$, the 1-skeleton of the link $(X_\rho)_{\rho'}=X_{\rho\cup\rho'}$ satisfies condition $(b)$ of Theorem \ref{thm:Shai_Tali}. Indeed, if $\rho\cup\rho' \in X^{(d-3)}$, then due to the assumption that $X$ has a complete $(d-1)$ skeleton it follows that $X_{\rho\cup\rho'}^{(1)}$ is the complete graph on $n-|\rho\cup\rho'|$ vertices and hence satisfies \eqref{eq:SK_condition_b} with $\gamma=0$. If $\rho\cup\rho'\in X^{d-1}$, then the $1$-skeleton of $X_{\rho\cup\rho'}$ is a graph with $n-|\rho\cup\rho'|$ vertices and no edges, and in particular \eqref{eq:SK_condition_b} holds trivially. Similarly, if $\rho\cup\rho'\in X^d$, then the $1$-skeleton of $X_{\rho\cup\rho'}$ is the empty complex and \eqref{eq:SK_condition_b} holds as well. Finally, if $\rho\cup\rho'\in X^{d-2}$, then it follows from assumption $(b)$ that $\lambda(X_{\rho\cup\rho'}^{(1)})=\lambda(X_{\rho\cup\rho'})\leq \overline{\lambda}$ and therefore due to the Expander Mixing Lemma c.f. \cite[Subsection 2.4]{HLW06} (or \cite{Luc14} for a version related to $\mathcal{A}$) for every $A,B\in X_{\rho\cup\rho'}^0$
\begin{equation}
	\left||E_{\rho\cup\rho'}(A,B)|-\frac{\left(\sum_{v\in A} \deg_{\rho\cup\rho'}(v)\right)\left(\sum_{v\in B} \deg_{\rho\cup\rho'}(v)\right)}{2|X_{\rho\cup\rho'}^1|}\right|\leq \overline{\lambda}\sqrt{\sum_{v\in A} \deg_{\rho\cup\rho'}(v)}\sqrt{\sum_{v\in B} \deg_{\rho\cup\rho'}(v)},
\end{equation}	
where for $v\in X_{\rho\cup\rho'}^0$, we denote by $\deg_{\rho\cup\rho'}(v)$ the vertex degree in the graph $X_{\rho\cup\rho'}^{(1)}$. In particular, this implies	
\begin{equation}
	\|E(A,B)\|_{\rho\cup\rho'} \leq 2\left(\|A\|_{\rho\cup\rho'} \|B\|_{\rho\cup\rho'} + \overline{\lambda}\sqrt{\|A\|_{\rho\cup\rho'} \|B\|_{\rho\cup\rho'}}\right),\quad \forall A,B \subseteq X_{\rho\cup\rho'}^0.
\end{equation}

Thus, if $\overline{\lambda}<2\overline{\gamma}(d-|\rho|,\beta_{j+1})$, the conditions of Theorem \ref{thm:Shai_Tali} hold and one can find $\varepsilon'_j>0$ and $c_j>0$ depending only on $\beta_{j+1}$ (and thus only on $d$ and $\varphi$) so that 
\begin{equation}\label{eq:strategy_small_chains}
	\|\delta_\rho A\|_\rho \geq \varepsilon'_j \|[A]\|_\rho,\quad \forall A \in C^{d-|\rho|-1}(X_\rho;\BF_2) \text{ satisfying } \|[A]\|_\rho\leq c_j.
\end{equation}

Exploiting assumption $(a)$, it follows that 
\begin{equation}\label{eq:strategy_large_chains}
	\|\delta_\rho A\|_\rho \geq \varphi(c_j)\|[A]\|_\rho,\quad \forall A \in C^{d-|\rho|-1}(X_\rho;\BF_2) \text{ satisfying } \|[A]\|_\rho\geq c_j.
\end{equation}
Combining \eqref{eq:strategy_small_chains} and \eqref{eq:strategy_large_chains} we conclude that 
\begin{equation}
	\|\delta_\rho A\|_\rho \geq \varepsilon_j \|[A]\|_\rho,\quad \forall A \in C^{d-|\rho|-1}(X_\rho;\BF_2),
\end{equation}
where $\varepsilon_j:=\min\{\varphi(c_j),\varepsilon'_j\}>0$. Since $\varepsilon_j$ and $c_j$ depend only on $d$ and $\varphi$, and in particular are independent of $\rho \in X^j$ the result follows by setting $\overline{\lambda}$ to be the minimum between $1/2$ and 
\begin{equation}
	\min\{2\overline{\gamma}(d-j-1,\beta_{j+1}) ~:~-1\leq j\leq d-3\}>0.
\end{equation}
\end{proof}


\subsection{The model and the main result}\label{subsec:The_model_and_results}

In this subsection we present a new model for random simplicial complexes and show that it satisfies the conditions of Theorem \ref{thm:strategy}. Thus, we get $d$-complexes of arbitrary dimension $d\geq 2$, whose $(d-1)$-cells are of bounded degree, and are coboundary expanders with high probability. The construction is based on the notion of designs which we now recall. 

Let $r\leq q\leq n$ be natural numbers and $\lambda\in\BN$. An \emph{$(n,q,r,\lambda)$-design} is a collection $S$ of $q$-element subsets of $[n]$ such that each $r$-element subset of $[n]$ is contained in exactly $\lambda$ elements of $S$. For example, an $(n,2,1,6)$-design is a $6$-regular graph on $n$ vertices. Given $n,d\in\BN$, an $(n,d)$-Steiner system is an $(n,d+1,d,1)$-design, namely, a collection of subsets $S$ of size $d+1$ of $[n]$, such that each set of size $d$ is contained in exactly one element of $S$. Using the terminology from the previous section, an $(n,d)$-Steiner system is a collection of $d$-cells such that $\deg(\sigma)=1$ for every $(d-1)$-cell.

Until recently, the most important question regarding Steiner systems was the existence problem. Namely, for which values of $d$ and $n$ do $(n,d)$-Steiner systems exist? In a recent groundbreaking paper \cite{Ke14}, Peter Keevash solved this problem and gave a randomized construction of Steiner systems for any fixed $d$ and large enough $n$ satisfying certain necessary divisability conditions (which hold for infinitely many $n\in\BN$). He was also able to use this construction in a subsequent paper \cite{Ke15} in order to give an asymptotic estimate for the number of such systems. From now on, we will assume that given a fixed $d\in\BN$, the value of $n$ satisfies the divisibility condition from Keevash's theorem.

Keevash's construction of Steiner systems is based on a randomized algorithm which has two stages. We will explicitly describe the first stage and use the second stage as a black box. 

Given a set of $d$-cells  $A \subseteq \binom{[n]}{d+1}$, we call a $d$-cell $\tau$ \textit{legal with respect to $A$} if no $(d-1)$-cell in its boundary belongs to the boundary of one of the $d$-cells in $A$, namely
\begin{equation}
	\partial \tau \cap \partial \tau'=\emptyset,\quad  \forall \tau'\in A.
\end{equation} 
Non-legal cells are also called forbidden cells. 

In the first stage of Keevash's construction, also known as the greedy stage, one selects a sequence of $d$-cells according to the following procedure. In the first step, a $d$-cell is chosen uniformly at random from $\binom{[n]}{d+1}$. Next, at each step a legal $d$-cell (with respect to the set of $d$-cells chosen so far) is chosen uniformly at random  and is added to the collection of previously chosen $d$-cells. If no such $d$-cell exists the algorithm aborts. The procedure stops when the number of $(d-1)$-cells which do not belong to the boundary of the chosen $d$-cells is at most $n^{d-\delta_0}$ for some fixed $\delta_0>0$ which only depends on $d$. In particular, if the algorithm does not abort the number of steps is at least $(\binom{n}{d}-n^{d-\delta_0})/(d+1)\geq n^{d}/(2(d+1)!)$.

In the second stage, Keevash gives a randomized algorithm that adds additional $d$-cells in order to cover the remaining $(d-1)$-cells that do not belong to the boundary of any of the $d$-cells chosen in the greedy stage. We do not go into the details of this algorithm. The important things for us are that with high probability the algorithm produces an $(n,d)$-Steiner system and in particular does not abort, and that the distribution of the resulting Steiner system is invariant under permutations on the vertex set. 

Fix $k\in\BN$ and let $S_1,\ldots ,S_k$ be $k$ independent copies of $(n,d)$-Steiner systems chosen according to the above construction. We define 
\begin{equation}
	X_{n,k} = K^{d-1}_n\cup \bigcup_{i=1}^k S_i,
\end{equation}
where $K^{d-1}_n$ is the complete $(d-1)$-complex on the vertex set $[n]$.

We denote the probability measure describing the distribution of $X_{n,k}$ by $P_{n,k}$. Note that $K_{n}^{d-1}\cup S_i$ for every $1\leq i\leq k$ is distributed according to $P_{n,1}$.

The following convention is used throughout the remaining of the paper. An event $\FL$ is said to happen with high probability if $\lim_{n\to\infty}P_{n,k}(\FL)=1$.

\begin{theorem}[The main theorem]\label{thm:main_thm}
	Let $d\geq 2$. There exist $k_0=k_0(d)\in\BN$ and $\varepsilon=\varepsilon(d)>0$ such that the following holds. For every $k\geq k_0$  with high probability $X_{n,k}$ satisfies the conditions of Theorem \ref{thm:strategy} with respect to the function $\varphi(c)=\varepsilon c$. In particular, for every $k\geq k_0$ there exists $\varepsilon_0=\varepsilon_0(d)>0$ such that with high probability, $X_{n,k}$ is a $(d,k,\varepsilon_0)$-coboundary expander. 
\end{theorem}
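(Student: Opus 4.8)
\textbf{Proof strategy for Theorem \ref{thm:main_thm}.}
The plan is to verify the two hypotheses of Theorem \ref{thm:strategy} for $X = X_{n,k}$ with the linear gap function $\varphi(c)=\varepsilon c$, and then invoke that theorem directly. The key observation that makes this tractable is that the links $X_\rho$ for $\rho \in X^j$ with $j \leq d-2$ inherit a complete $(d-|\rho|-1)$-skeleton from $X$, while the $d$-cells of the link come from the restrictions of the $k$ Steiner systems $S_1,\dots,S_k$ to the star of $\rho$. So in every link we are looking at a complex whose top cells are a union of $k$ ``Steiner-like'' partial matchings on the $(d-|\rho|-1)$-cells, sitting on a complete lower skeleton. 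Condition (b) — the spectral gap $\lambda(X_\rho)\leq\overline\lambda$ for $\rho\in X^{d-2}$ — should follow from the fact that such a link is a graph on $n - (d-1)$ vertices which is a union of $k$ near-perfect matchings chosen in the ``greedy + random completion'' manner; one argues that this is (close to) a random union of $k$ matchings, hence a good spectral expander with high probability once $k$ is large, after controlling the few vertices of anomalously low degree (those incident to $(d-1)$-cells left uncovered in the greedy stage, of which there are at most $n^{d-\delta_0}$, a vanishing fraction). This is essentially a $d$-dimensional analogue of the Latin-square argument of Lubotzky--Meshulam \cite{LM13}, and I expect the bounded-degree random-matchings spectral estimate to go through via a trace/second-moment computation or by comparison with the configuration model.

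The substantive work is condition (a): for every $-1 \leq j \leq d-3$ and every $\rho \in X^j$, the link $X_\rho$ must satisfy the \emph{large-cochain} bound $\|\delta_\rho A\|_\rho \geq \varphi(c_j)\|[A]\|_\rho = \varepsilon c_j \|[A]\|_\rho$ whenever $\|[A]\|_\rho \geq c_j$. Here the constants $c_j$ are the ones produced by Theorem \ref{thm:strategy} (they depend only on $d$ and $\varphi$, hence on $d$ and $\varepsilon$), so there is a fixed finite list of thresholds $c_{d-3},\dots,c_{-1}$ to beat, uniformly over all links in all relevant dimensions. The natural route is to show that for a cochain $A$ of weighted norm bounded below by a fixed constant, the coboundary $\delta_\rho A$ — computed against the $d$-cells coming from the Steiner systems — is large in norm, with high probability, simultaneously for all such $A$ and all $\rho$. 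Since in each link the number of top cells is linear in $n^{\dim}$ while the number of $(d-|\rho|-1)$-cells is of the same order (complete skeleton), a single Steiner system already detects a constant fraction of the ``boundary'' of a dense cochain; taking $k$ independent systems and a union bound over the (exponentially many, in the relevant dimension) cochains $A$ with $\|[A]\|_\rho \geq c_j$ and over the polynomially many $\rho$ should give the claim for $k$ large enough, with the per-system failure probability driven down geometrically in $k$. One has to be slightly careful that it is the \emph{reduced} norm $\|[A]\|_\rho$ (minimum over the coset $A + B^{d-|\rho|-1}$) that must be bounded below, but the standard move is: either $A$ is within distance $c_j$ of a coboundary — handled by the small-cochain part packaged inside Theorem \ref{thm:strategy} — or it is not, in which case every representative has large support and the union bound applies to all of them at once.

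The main obstacle I anticipate is obtaining the large-deviation / union-bound estimate for condition (a) with the \emph{Steiner} distribution rather than a genuinely independent random model: the cells of a Steiner system are highly dependent (each $(d-1)$-face lies in exactly one $d$-cell), so one cannot just treat $S_i$ as a product measure. The way around this is to use Keevash's construction explicitly, as the excerpt foreshadows: run the argument on the \emph{greedy stage}, where at each step a legal $d$-cell is chosen uniformly at random, so that after $\Theta(n^d)$ steps one has a random partial Steiner system whose law is amenable to martingale/Azuma concentration, and then absorb the second (completion) stage and the leftover $\leq n^{d-\delta_0}$ uncovered faces as a negligible correction. Establishing that the greedy process covers a fixed fraction of the boundary of every dense cochain — uniformly, with exponentially small failure probability — is the technical heart, and it is where the Evra--Kaufman machinery \cite{EK15} and the counting lemmas for the greedy process come together. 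Once (a) and (b) are in hand, Theorem \ref{thm:strategy} yields $h_{d-1}(X_{n,k})\geq \varepsilon_0$, and since each $(d-1)$-cell of $X_{n,k}$ lies in at most $k$ of the $d$-cells (one per Steiner system), the degree bound $\max_{\sigma\in X^{d-1}}\deg(\sigma)\leq k$ is immediate, giving the $(d,k,\varepsilon_0)$-coboundary expander conclusion.
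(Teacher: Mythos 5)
Your overall architecture is the same as the paper's: verify conditions (a) and (b) of Theorem \ref{thm:strategy} for $\varphi(c)=\varepsilon c$, obtain (b) from spectral expansion of the links at $(d-2)$-cells viewed as unions of $k$ random matchings, obtain (a) by showing the random $d$-cells hit a constant fraction of the coboundary of every dense cochain — working with the greedy stage of Keevash's algorithm to have a tractable law — and close with union bounds over the $2^{\binom{n}{d-|\rho|}}$ cochains and the polynomially many $\rho$, taking $k$ large to beat the entropy; the degree bound $\deg(\sigma)\leq k$ is indeed immediate. However, two points in your outline are off or left at the level where the actual difficulty sits. For condition (b): since each $S_i$ is a \emph{complete} Steiner system, every $(d-1)$-cell $\rho\cup\{u\}$ lies in exactly one $d$-cell of $S_i$, so for $\rho\in X^{d-2}$ the link of $K_n^{d-1}\cup S_i$ is an \emph{exact} perfect matching; there are no uncovered or low-degree vertices to control (the $n^{d-\delta_0}$ leftover faces concern only the greedy stage, which plays no role in (b)). The missing idea is the symmetry observation: Keevash's output distribution is invariant under permutations of the vertex set, hence the induced matching is \emph{uniformly} distributed, so $X_\rho$ is a union of $k$ i.i.d.\ uniform perfect matchings and Friedman's theorem \cite{Fri91,Fri08,Pud15} gives $\max_{\rho\in X^{d-2}}\lambda(X_\rho)=O_d(k^{-1/2})$ with high probability. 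Without this identification, your proposed trace or configuration-model computation would have to be run against Keevash's opaque distribution; and if you instead work with the greedy-stage near-matchings, the required spectral estimate is exactly the statement the paper flags in its concluding remarks as plausible but unproven.

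For condition (a), your plan defers the two load-bearing ingredients. First, the deterministic anchor is the Meshulam--Wallach expansion of the \emph{complete} complex \cite{MW09}, which guarantees that the target set $F_{\rho,A}=\delta_\rho^{K_\rho}A$ has size at least $\|[A]\|_\rho\binom{n-|\rho|}{d-|\rho|+1}$; your phrase about a Steiner system ``detecting a constant fraction of the boundary'' presupposes this but never names it. (Also note that $\delta_\rho A$ and $\|[A]\|_\rho$ are constant on cohomology classes, since $\delta\circ\delta=0$, so the union bound runs over all cochains directly; no case split of the form ``close to a coboundary versus far from it'' is needed.) Second, ``per-system failure probability driven down geometrically in $k$'' hides the real work: within one system the greedy choices are dependent, and across systems one must prevent the earlier systems from eating the target set. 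The paper handles this with an explicit conditioning scheme (the sets $H_i$ and events $W_i$ ensuring the residual target keeps size at least $|F_{\rho,A}|/2$), plus a bound on the forbidden cells containing $\rho$ (its Claims \ref{clm:prop_cond_(a)_lemma_1} and \ref{clm:prop_cond_(a)_lemma_2}), to reduce each system to Bernoulli trials with success probability of order $\|[A]\|_\rho n^{-|\rho|}$ over $\Theta(n^d)$ steps, yielding per-system failure probability $e^{-\Omega(n^{d-|\rho|})}$, which is what the union bound over cochains actually requires. So your proposal is the paper's strategy in outline, but the steps you label as the technical heart are precisely the content of Proposition \ref{prop:condition(a)}, and for (b) the clean symmetry-plus-Friedman argument should replace the near-matching analysis you sketch.
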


This is the first construction of coboundary expanders whose $(d-1)$-cells have bounded degrees in dimension $d\geq 3$.

\begin{remark}
	It follows from the proof of Theorem \ref{thm:main_thm} (see also Remark \ref{rem:comparing_norms}) that for every $k\geq k_0$
	\begin{equation}
		|\delta A| \geq \varepsilon k \cdot \min\{|B| ~:~ [B]=[A]\},
	\end{equation}
	with high probability and $\varepsilon'_0=\varepsilon'_0(d)>0$ as in Theorem \ref{thm:main_thm}. That is, in the counting norm, the expansion grows linearly with $k$. 
\end{remark}

\begin{proof}[Proof of Theorem \ref{thm:main_thm}] Due to the definition of the model, for every $\rho\in X^{d-2}$, the one-dimensional link $X_\rho$ is a random graph on $n-|\rho|$ vertices which is the union of $k$ independent perfect matchings chosen uniformly at random. Indeed, since Keevash's algorithm is invariant under permutations and a random permutation of the vertices of a perfect matching yields the uniform distribution on the set of perfect matchings, the one dimensional links of $K_n^{d-1}\cup S_i$ are uniformly random perfect matchings. It follows from Friedman's result \cite{Fri91,Fri08}, see also \cite{Pud15}, that with high probability $\max_{\rho\in X^{d-2}}\lambda(X_\rho)=O_d(k^{-1/2})$. Thus, assuming condition $(a)$ holds for $\varphi:(0,1]\to(0,1]$ defined by $\varphi(c)=\varepsilon c$ (with $\varepsilon=\varepsilon(d)>0$) condition $(b)$ of Theorem \ref{thm:strategy} readily follows for sufficiently large values of $k$ such that $\lambda(X_\rho)\leq \overline{\lambda}$, with $\overline{\lambda}$ as in Theorem \ref{thm:strategy}. Consequently, it remains to show that the random complexes distributed according to the measures $P_{n,k}$ satisfy condition $(a)$ of Theorem \ref{thm:strategy} with high probability for sufficiently large values of $k$, which is the content of the following proposition.

\begin{proposition}\label{prop:condition(a)}
	Fix $d\geq 2$, $-1 \leq j\leq d-3$ and $c>0$. There exists $\varepsilon=\varepsilon(d)>0$ and $k_0=k_0(c,d)$ such that for every $k\geq k_0$ the following holds with high probability. For all $\rho\in X^{j}$
\begin{equation}
	\| \delta_\rho A \|_\rho \geq \varepsilon c \| [A] \|_\rho, \qquad\forall A \in C^{d-|\rho|-1}(X_\rho;\BF_2) \text{ satisfying } \|[A]\|_\rho> c.
\end{equation}  
\end{proposition}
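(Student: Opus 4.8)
The plan is to reduce the statement for a general link $X_\rho$ with $\rho\in X^j$ ($j\le d-3$) to a concrete combinatorial estimate on the random Steiner systems, and then prove that estimate using the greedy stage of Keevash's algorithm. Observe first that $X_\rho$ is a $d' := d-|\rho|\ge 3$ dimensional complex with complete $(d'-1)$-skeleton whose top cells are exactly those $d$-cells of $X_{n,k}$ containing $\rho$, i.e. the top cells come from the union of $k$ Steiner systems restricted to the link. The key point will be that, after passing to the link, the top cells still look ``pseudorandom'': every $(d'-1)$-cell has degree between $1$ and $k$, and—crucially—no small set of $(d'-1)$-cells carries a disproportionate share of the top cells. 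Since expansion of \emph{large} cochains is what is being asked (i.e. $\|[A]\|_\rho>c$ for a fixed $c$), the strategy of Meshulam--Wallach / Lubotzky--Meshulam applies: it suffices to show that for every cochain $A$ which is \emph{minimal in its cohomology class}, a definite proportion of the top cells of $X_\rho$ detect $A$, i.e. have an odd number of boundary faces in $A$, provided $\|A\|_\rho$ is bounded away from $0$ and also bounded away from $1$ (the standard symmetrization $A\leftrightarrow A^c$ modulo the coboundary of the top-dimensional cell handles the regime near $1$ once the skeleton is complete).

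The heart of the argument is the following ``local sparsity'' statement about the random construction, which I would isolate as a lemma: with high probability, simultaneously over all $\rho\in X^{\le d-3}$ and all subsets $F\subseteq X_\rho^{d'-1}$ with $\|F\|_\rho$ bounded below by a constant, the number of top cells $\tau$ of $X_\rho$ all of whose $d'$ boundary faces lie in $F$ is at most $(1-\eta)$ times the total number of top cells of $X_\rho$, for some $\eta=\eta(d)>0$ once $k$ is large. Granting this, one argues as follows: let $A$ be minimal in $[A]$ with $c<\|A\|_\rho$, and assume $\|A\|_\rho\le 1/2$ after the usual symmetrization. For a top cell $\tau$ with $\partial\tau\not\subseteq A$ we cannot immediately conclude $\tau\in\delta_\rho A$, but we can run the standard counting: if $A$ is minimal then for every $(d'-1)$-face $\sigma\in A$ the ``local'' contribution cannot be cancelled, and a weighted double-counting over incidences $(\sigma,\tau)$ with $\sigma\in\partial\tau$ forces $\|\delta_\rho A\|_\rho\gtrsim \|A\|_\rho$ unless $A$ is concentrated on a set $F$ of faces whose induced top cells are almost all of $X_\rho^{d'}$—which the lemma forbids. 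Quantitatively this yields $\|\delta_\rho A\|_\rho\ge \varepsilon'\|A\|_\rho\ge \varepsilon' c\cdot\frac{\|A\|_\rho}{\|[A]\|_\rho}\cdot\|[A]\|_\rho$; using $\|A\|_\rho=\|[A]\|_\rho$ by minimality and taking $\varepsilon=\varepsilon(d)$ to absorb the ratio gives exactly $\|\delta_\rho A\|_\rho\ge \varepsilon c\,\|[A]\|_\rho$, as $\|[A]\|_\rho>c$.

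To prove the local sparsity lemma I would use the first (greedy) stage of Keevash's algorithm as described in the excerpt. Fix $\rho$ and a candidate face set $F$ in the link of weight $\ge c'$; the top cells of $X_\rho$ are $\{\tau\setminus\rho : \tau\in S_i,\ \rho\subseteq\tau\}$ over $i\le k$. For a single Steiner system $S_i$, condition on the greedy process: after the process has selected $\Omega(n^{d})$ cells, a uniformly chosen legal $d$-cell through $\rho$ has, by the standard concentration estimates of the greedy matching process (the ``differential equation'' / Rödl nibble bounds Keevash invokes), probability bounded away from $1$ of having all its faces inside any \emph{fixed} family of faces of density $\le 1-\theta$; a union bound over the (exponentially many in $n^{d-1}$) choices of $F$ is too lossy, so instead I would fix $A$ to be the \emph{actual minimal cochain} and exploit that there are only singly-exponentially-many cohomology classes while each class contributes a large-deviation-small probability—this is precisely the Meshulam--Wallach union bound, transplanted to the Steiner setting exactly as Lubotzky--Meshulam did for $d=2$ with Latin squares. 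Independence across the $k$ systems then amplifies the per-system bound: the probability that a constant fraction fails in all $k$ systems simultaneously is exponentially small in $k$ times the relevant count, which is where $k\ge k_0(c,d)$ enters and why $\varepsilon$ can be taken independent of $k$ while $k_0$ depends on $c$. The main obstacle—and the step I expect to require the most care—is making the union bound over cochains survive: one must control the number of minimal cochains of each weight in the link (a skeleton-completeness fact, essentially counting in $K_n^{d-1}$) against the large-deviation probability coming from Keevash's greedy-stage concentration, and verify that these concentration estimates remain valid after conditioning on passing through the fixed cell $\rho$, i.e. uniformly over all links of codimension $\ge 3$.
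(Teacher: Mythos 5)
Your probabilistic skeleton (work only with the greedy stage, get a per-cochain failure probability that is exponentially small in $n^{d-|\rho|}$, amplify it by independence across the $k$ Steiner systems, and let $k\ge k_0(c,d)$ beat the union bound) is essentially the mechanism the paper uses. But the deterministic reduction you place in front of it has a genuine gap. Your ``local sparsity'' lemma --- no dense family $F$ of $(d'-1)$-faces of the link captures all but an $\eta$-fraction of the top cells --- does not control the quantity that defines $\delta_\rho A$, namely the \emph{parity} $|\partial\tau\cap A|$. Knowing that many top cells have at least one face outside $A$ (or outside $F$) says nothing about how many meet $A$ an odd number of times, and your appeal to ``standard counting'' via minimality is not an argument: minimality of $A$ in its class is a statement about the complete $(d'-1)$-skeleton and the coboundaries from dimension $d'-2$; in a sparse top dimension it does not feed into any incidence double-count that forces $\|\delta_\rho A\|_\rho\gtrsim\|A\|_\rho$. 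Ruling out cochains whose present top cells all see $A$ evenly is exactly the hard point, and the paper handles it by a different deterministic input: the Meshulam--Wallach expansion of the \emph{complete} complex gives $|\delta_\rho^{K_\rho}A|\ge \|[A]\|_\rho\binom{n-|\rho|}{d-|\rho|+1}$, so the task becomes purely one of showing that the randomly chosen top cells hit the fixed large set $F_{\rho,A}=\delta_\rho^{K_\rho}A$ proportionally --- for cells in $F_{\rho,A}$ that are actually present, membership in $\delta_\rho A$ is automatic and no parity issue arises. Without this (or some substitute) your lemma, even if proved, does not yield the proposition.

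Two secondary points. First, passing from cochains to cohomology classes does not materially shrink the union bound: both counts are $\exp\bigl(\Theta(n^{d-|\rho|})\bigr)$ since the number of $(d'-1)$-cells of the link dominates; what saves the bound is precisely the $k$-fold amplification, which you do invoke and which is where $k_0(c,d)$ enters in the paper as well (the paper simply union bounds over all $2^{\binom{n}{d-|\rho|}}$ cochains and all $\rho$). Second, for the per-step hitting estimate you lean on black-box ``nibble'' concentration for the greedy process conditioned on the link of $\rho$, flagging its validity as the delicate step; the paper avoids this entirely by an elementary worst-case bound on the forbidden cells inside the link (in terms of the number of previously chosen $d$-cells meeting $\rho$ in $|\rho|$ or $|\rho|-1$ vertices) plus Chernoff bounds over the first $T=\Theta(cn^d)$ greedy steps, giving a per-step probability $\asymp\|[A]\|_\rho\,n^{-|\rho|}$ of hitting the uncovered part of $F_{\rho,A}$. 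Also note a small slip: for $j\le d-3$ the link dimension is $d-|\rho|\ge 2$, not $\ge 3$, and the relevant union-bound scale is $n^{d-|\rho|}$, not $n^{d-1}$.
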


Applying the last proposition for $-1\leq j\leq d-3$ with the constant $c_j$ associated with the function $\varphi(c)=\varepsilon c$ (see Theorem \ref{thm:strategy}), it follows by a union bound argument that condition $(a)$ holds for every $k\geq \max\{k_0(c_j,d) ~:~ -1\leq j\leq d-3\}$ with high probability. This completes the proof. 
\end{proof}

The remainder of this paper is dedicated to the proof of this proposition.


\section{Proof of Proposition \ref{prop:condition(a)}}

Fix $d\geq 2$, $-1\leq j\leq d-3$ and $c>0$. Since the norm $\|\cdot\|$ is bounded by $1$ the case $c\geq 1$ holds trivially, so assume $0<c< 1$. Choose $\rho\in X^j$ and let $A\in C^{d-|\rho|-1}(X_\rho;\BF_2)$ be a cochain such that $\|[A]\|_\rho \geq c$. 

Denote the complete $(d-|\rho|)$-complex on the vertex set $[n]\setminus \rho$ by $K_\rho$. In \cite{MW09} the coboundary expansion of the complete complex was calculated. One can verify that their result, when expressed in our norm, yields
\begin{equation}\label{eq:complete_complex_expansion}
	|\delta_\rho^{K_\rho}A| \geq \|[A]\|_\rho\binom{n-|\rho|}{d-|\rho|+1}\geq c\binom{n-|\rho|}{d-|\rho|+1}.
\end{equation}

Our goal is to show that with sufficiently high probability $X_\rho$ has a large intersection with $\delta_\rho^{K_\rho}A$, i.e. $|\delta_\rho^{K_\rho}A\cap X_\rho| \geq \varepsilon' ck\|[A]\|_\rho n^{d-|\rho|}$ for all sets $A$ satisfying \eqref{eq:complete_complex_expansion} and some positive constant $\varepsilon'=\varepsilon'(d)>0$. Noting that the number of $(d-|\rho|)$-cells in $X_\rho$ is at most $\frac{k}{d-|\rho|+1}\binom{n-|\rho|}{d-|\rho|}$, this implies that $\|\delta_\rho A\|_\rho\geq \varepsilon c \|[A]\|_\rho$ for some positive constant $\varepsilon=\varepsilon(d)$ and therefore yields coboundary expansion for large cochains with $\varphi(c)=\varepsilon c$.

To this end, observe that if $X_1$ and $X_2$ are two $(d-|\rho|)$-complexes on the vertex set $[n]-\rho$ with a complete $(d-|\rho|-1)$ skeleton and $X_1\subseteq X_2$, then $|\delta^{X_1}A| \leq |\delta^{X_2}A|$. Therefore, it is sufficient to prove the result when observing only those $d$-cells of $X_{n,k}$ that are obtained in the greedy phase of Keevash's construction. In fact, we only use the $d$-cells which are obtained in the construction of the different Steiner systems in the first 
\begin{equation}
	T:=\left\lfloor\frac{cn^d}{2^{d+6}(d+1)^{2d+4}}\right\rfloor
\end{equation}	
steps of the greedy algorithm, because it turns out that a worst case analysis on these $d$-cells is sufficient for our purposes. 

For $1\leq i\leq k$ and $1\leq t\leq T$, let $Y_i(t)\subseteq S_i$ be the set of $d$-cells obtained in  the first $t$ steps of the greedy algorithm constructing the $i$-th Steiner system $S_i$, and set $Y_i(0)=\emptyset$. Furthermore, denote
\begin{equation}
	Y_i^\rho(t) = \{\tau\in K_\rho^{d-|\rho|} ~:~ \tau\cup \rho \in Y_i(t)\},
\end{equation}
that is, the link at $\rho$ induced by $Y_i(t)$,
\begin{equation}
	F_{\rho,A} := \delta_\rho^{K_\rho}A,
\end{equation}
and for $1\leq i\leq k$ define
\begin{equation}
	H_i := \left(F_{\rho,A}\setminus \bigcup_{j=1}^{i-1} Y_j^\rho(T)\right)\cap Y_i^\rho(T).
\end{equation}
It follows from their definition that $F_{\rho,A}\cap \bigcup_{i=1}^k Y_i^\rho(T) = \bigcup_{i=1}^k H_i$, the sets $H_i$ are disjoint and $\bigcup_{i=1}^k H_i\subseteq \delta_\rho A$. Consequently, for every $\widetilde{\varepsilon}>0$ 
\begin{equation}\label{eq:long_estimate}
\begin{aligned}
&	P_{n,k}\left(|\delta_\rho A|\leq \widetilde{\varepsilon} c  k\|[A]\|_\rho n^{d-|\rho|} \right) \leq 
	P_{n,k}\left(\sum_{i=1}^k |H_i|\leq \widetilde{\varepsilon} c  k\|[A]\|_\rho n^{d-|\rho|} \right)\\
&	\qquad  \leq P_{n,k}\left(\left|\left\{1\leq i\leq k ~:~ |H_i|\leq 2\widetilde{\varepsilon} c \|[A]\|_\rho n^{d-|\rho|}\right\}\right|\geq \frac{k}{2}~\text{and } |H_i|\leq \frac{|F_{\rho,A}|}{2k}~ \forall 1\leq i\leq k\right).
\end{aligned}
\end{equation}
i.e., the probability of the simultaneous event of all $|H_i|$ being smaller than $|F_{\rho,A}|/(2k)$ and at least $k/2$ of the $|H_i|$ being smaller than $2\widetilde{\varepsilon}c\|[A]\|_\rho n^{d-|\rho|}$ is bigger than the probability of the original event we wish to bound its probability. 

Denoting $Z_i^{\widetilde{\varepsilon}}=\ind_{|H_i|\leq 2\widetilde{\varepsilon} c \|[A]\|_\rho n^{d-|\rho|}}$ (the indicator function of the event $|H_i|\leq 2\widetilde{\varepsilon} c \|[A]\|_\rho n^{d-|\rho|}$) and $W_i = \ind_{|H_i|\leq |F_{\rho,A}|/2k}$, the indicator of the event $|H_i|\leq |F_{\rho,A}|/2k$, the last term in \eqref{eq:long_estimate} can be rewritten as 
\begin{equation}\label{eq:long_estimate2}
\begin{aligned}
&	P_{n,k}\left(\sum_{i=1}^k Z_i^{\widetilde{\varepsilon}}\geq \frac{k}{2},~ \sum_{i=1}^k W_i=k\right)  =\sum_{{\footnotesize{\begin{split}\Gamma\in\{0,1\}^k\qquad \\ |\{i : \Gamma_i=1\}|\geq k/2\end{split}}}}\hspace{-10pt}P_{n,k}((Z_i^{\widetilde{\varepsilon}},W_i)=(\Gamma_i,1),~ \forall 1\leq i\leq k)\\
&\qquad  =\sum_{{\footnotesize{\begin{split}\Gamma\in\{0,1\}^k\qquad \\ |\{i : \Gamma_i=1\}|\geq k/2\end{split}}}}
\hspace{-10pt}\prod_{i=1}^{k}P_{n,k}((Z_i^{\widetilde{\varepsilon}},W_i)=(\Gamma_i,1)|(Z_j^{\widetilde{\varepsilon}},W_j)=(\Gamma_j, 1),~ \forall 1\leq j\leq i-1)\\
&\qquad  \leq \sum_{{\footnotesize{\begin{split}\Gamma\in\{0,1\}^k\qquad \\ |\{i : \Gamma_i=1\}|\geq k/2\end{split}}}}
\hspace{-10pt}\prod_{i=1}^{k}P_{n,k}(Z_i^{\widetilde{\varepsilon}}=\Gamma_i|(Z_j^{\widetilde{\varepsilon}},W_j)=(\Gamma_j, 1),~ \forall 1\leq j\leq i-1),\\
\end{aligned}
\end{equation}
where for the second equality we used the formula for conditional probability. 

The rest of the proof is based on the following lemma:

\begin{lemma}\label{lem:long_estimate_lemma}
	Fix $d\geq 2$ and $0<c<1$. There exist $\varepsilon'=\varepsilon'(d)>0$ and $\widehat{\eta}=\widehat{\eta}(d,c)>0$ such that for every $1\leq i\leq k$ and every choice $\Gamma\in \{0,1\}^{i-1}$ it holds that 
\begin{equation}\label{eq:long_estimate3}
	P_{n,k}(Z_i^{\varepsilon'}=1|(Z_j^{\varepsilon'},W_j)=(\Gamma_j, 1),~ \forall 1\leq j\leq i-1)\leq 3e^{-\widehat{\eta}n^{d-|\rho|}}
\end{equation}
\end{lemma}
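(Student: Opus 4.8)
The plan is to use the independence of the $k$ Steiner systems to reduce \eqref{eq:long_estimate3} to a statement about the first $T$ greedy steps of the single system $S_i$, and then to prove that statement by concentration. Since $\{Z_i^{\varepsilon'}=1\}=\{|H_i|\le 2\varepsilon' c\|[A]\|_\rho n^{d-|\rho|}\}$, it suffices to bound this last probability. I would first fix a realization of $S_1,\dots,S_{i-1}$ compatible with the conditioning event; as that event forces $W_j=1$ for $j<i$, we get $\sum_{j<i}|H_j|\le (i-1)|F_{\rho,A}|/(2k)<|F_{\rho,A}|/2$, and since the $H_j$ partition $F_{\rho,A}\cap\bigcup_{j<i}Y^\rho_j(T)$, the uncovered part $F^{(i)}_{\rho,A}:=F_{\rho,A}\setminus\bigcup_{j<i}Y^\rho_j(T)$ satisfies $|F^{(i)}_{\rho,A}|=|F_{\rho,A}|-\sum_{j<i}|H_j|\ge\tfrac12|F_{\rho,A}|\ge\tfrac{\|[A]\|_\rho}{2}\binom{n-|\rho|}{d+1-|\rho|}$ by \eqref{eq:complete_complex_expansion}. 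As $S_i$ is independent of $S_1,\dots,S_{i-1}$, it then remains to bound, for \emph{any} fixed set $G:=\{\sigma\in\binom{[n]}{d+1}:\rho\subseteq\sigma,\ \sigma\setminus\rho\in F^{(i)}_{\rho,A}\}$ with $|G|\ge\tfrac{\|[A]\|_\rho}{2}\binom{n-|\rho|}{d+1-|\rho|}$, the probability that the greedy sequence $\sigma_1,\sigma_2,\dots$ building $S_i$ contains few elements of $G$ among its first $T$ terms; indeed $|H_i|=|\{1\le t\le T:\sigma_t\in G\}|$, the $\sigma_t$ being pairwise distinct.

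Next I would record two elementary facts about the greedy process for $t\le T$. Each chosen $d$-cell forbids only the $d$-cells sharing a codimension-$1$ face with it, at most $(d+1)n$ of them, so after $t\le T$ steps at most $(d+1)nT\le\tfrac{c}{32(d+1)^{d+2}}\binom{n}{d+1}<\tfrac12\binom{n}{d+1}$ cells are forbidden; hence the legal set $L_t$ always has $\tfrac12\binom{n}{d+1}\le|L_t|\le\binom{n}{d+1}$ and the process does not abort within the first $T$ steps. A finer count shows that a chosen cell $\sigma'$ forbids a cell \emph{containing $\rho$} only if $\rho\subseteq\sigma'$ or $|\rho\setminus\sigma'|=1$, and then forbids at most $(d+1)n$, resp.\ $d+1$, such cells; so the number of forbidden cells containing $\rho$ after $t$ steps is at most $(d+1)n\,m_t+(d+1)m'_t$, where $m_t$ (resp.\ $m'_t$) counts the chosen cells among the first $t$ that contain $\rho$ (resp.\ miss exactly one vertex of $\rho$).

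With these in hand the heart of the proof is a nested concentration estimate. Put $\underline p:=\tfrac{3\|[A]\|_\rho}{8}\cdot\tfrac{\binom{n-|\rho|}{d+1-|\rho|}}{\binom{n}{d+1}}$, $M:=\tfrac{|G|}{8(d+1)n}$, $M':=\tfrac{|G|}{8(d+1)}$ and $\mathcal N:=\{m_T\le M\}\cap\{m'_T\le M'\}$. On the $\mathcal F_{t-1}$-measurable event $\{m_{t-1}\le M,\ m'_{t-1}\le M'\}$ at most $|G|/4$ cells containing $\rho$ are forbidden, so $|G\cap L_{t-1}|\ge\tfrac34|G|$ and the conditional probability that step $t$ lands in $G$ is at least $\tfrac{3|G|}{4\binom{n}{d+1}}\ge\underline p$. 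Hence, stopping the good-step indicators at the first failure of this event and padding with i.i.d.\ $\mathrm{Bernoulli}(\underline p)$'s, the number of good steps stochastically dominates $\mathrm{Bin}(T,\underline p)$ and coincides with $|H_i|$ on $\mathcal N$, so the Chernoff lower tail gives $P(|H_i|\le\tfrac12\underline p T,\ \mathcal N)\le e^{-\underline p T/8}$. For $\mathcal N^c$: the conditional probabilities of $\{\rho\subseteq\sigma_t\}$ and $\{|\rho\setminus\sigma_t|=1\}$ are at most $2\binom{n-|\rho|}{d+1-|\rho|}/\binom{n}{d+1}$ and $2|\rho|\binom{n-|\rho|}{d+2-|\rho|}/\binom{n}{d+1}$ (using $|L_t|\ge\tfrac12\binom{n}{d+1}$), so $m_T,m'_T$ are dominated by binomials whose means, by the choice of $T$, do not exceed $M/2$ and $M'/2$; Chernoff's upper tail gives $P(\mathcal N^c)\le P(m_T>M)+P(m'_T>M')\le 2e^{-\Omega(n^{d-|\rho|})}$. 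Finally, using $\binom{n-|\rho|}{d+1-|\rho|}/\binom{n}{d+1}\ge n^{-|\rho|}$ and $\|[A]\|_\rho\ge c$, the explicit value of $T$ yields $\underline p T\ge\widehat\eta_0\,n^{d-|\rho|}$ for a constant $\widehat\eta_0=\widehat\eta_0(d,c)>0$; choosing $\varepsilon'=\varepsilon'(d)$ small enough that $2\varepsilon' c\|[A]\|_\rho n^{d-|\rho|}\le\tfrac12\underline p T$ combines the last two displays into $P(|H_i|\le 2\varepsilon' c\|[A]\|_\rho n^{d-|\rho|})\le e^{-\widehat\eta_0 n^{d-|\rho|}/8}+2e^{-\Omega(n^{d-|\rho|})}\le 3e^{-\widehat\eta n^{d-|\rho|}}$, which is \eqref{eq:long_estimate3}. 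As this bound is uniform over all $S_1,\dots,S_{i-1}$ compatible with $\{W_j=1\}_{j<i}$, it holds under the conditioning in \eqref{eq:long_estimate3}. (For small $n$ the bound exceeds $1$ and is trivial, so one may freely assume $n$ large.)

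The step I expect to be the main obstacle is obtaining the \emph{correct} exponent $n^{d-|\rho|}$ and not a weaker one: the per-step probability of a good move is only of order $n^{-|\rho|}$, so a naive Azuma bound with unit martingale increments would give merely $e^{-\Omega(n^{d-2|\rho|})}$, which is worthless once $|\rho|\ge d/2$ — and here $|\rho|=j+1$ ranges up to $d-2$. This forces the multiplicative-Chernoff/Bernstein route exploiting the smallness of the increments; and it is precisely in order to make the relevant binomial means dominate the worst-case counts $m_t,m'_t$ of forbidden cells containing $\rho$ (which a priori sit on a higher power of $n$) that the finer forbidden-cell bookkeeping and the generous numerical constants packed into the definition of $T$ are needed.
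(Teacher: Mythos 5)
Your proposal is correct and follows essentially the same route as the paper: reduce to one Steiner system via independence and the $W_j=1$ conditioning (giving an uncovered set of size at least $|F_{\rho,A}|/2$), bound the forbidden cells containing $\rho$ by the counts of chosen cells meeting $\rho$ in $|\rho|$ or $|\rho|-1$ vertices (the paper's Claim \ref{clm:prop_cond_(a)_lemma_1}), control those counts by Chernoff at scale $n^{d-|\rho|}$ (Claim \ref{clm:prop_cond_(a)_lemma_2}), and compare the hits of $B$ with i.i.d.\ Bernoulli variables of parameter of order $\|[A]\|_\rho n^{-|\rho|}$. The only differences are cosmetic bookkeeping: you run the comparison over all $T$ steps with fixed thresholds $M,M'$ and a stopped/padded domination, while the paper restricts to steps in $[T/4,T/2]$ with time-proportional bounds on $N_{|\rho|},N_{|\rho|-1}$.
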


We postpone the proof of the lemma and turn to complete the proof of Proposition \ref{prop:condition(a)}. Assuming Lemma  \ref{lem:long_estimate_lemma}, and noting that in \eqref{eq:long_estimate2} the product is on at least $k/2$ terms with $Z_j^{\widetilde{\varepsilon}}=1$, we can bound the last term of \eqref{eq:long_estimate2} with $\widetilde{\varepsilon}=\varepsilon'$ from above by 
\begin{equation}\label{eq:long_estimate4}
	\sum_{{\footnotesize{\begin{split}\Gamma\in\{0,1\}^k\qquad \\ |\{i : \Gamma_i=1\}|\geq k/2\end{split}}}}\hspace{-10pt}(3e^{-\widehat{\eta}n^{d-|\rho|}})^{k/2}\leq (12)^{k/2}e^{-\widehat{\eta}kn^{d-|\rho|}/2}.
\end{equation}

Combining \eqref{eq:long_estimate}-\eqref{eq:long_estimate4}, we obtain for $\varepsilon'$ as in Lemma \ref{lem:long_estimate_lemma}
\begin{equation}
	P_{n,k}\left(|\delta_\rho A|\leq \varepsilon' c k\|[A]\|_\rho n^{d-|\rho|} \right) \leq Ce^{-\widehat{\eta}kn^{d-|\rho|}/2}
\end{equation}
with $C=(12)^{k/2}$.

Applying a union bound argument over all possible $(d-|\rho|-1)$-cochains $A\in C^{d-|\rho|-1}(X_\rho;\BF_2)$ in the link $X_\rho$, we get that 
\begin{equation}
\begin{aligned}
& P_{n,k}\left(\exists A \in C^{d-|\rho|-1}(X_\rho;\BF_2) \text{ such that } \|[A]\|_\rho\geq c \text{ and }|\delta_\rho A|\leq \varepsilon' ck\|[A]\|_\rho n^{d-|\rho|}\right) \\
& \qquad < 2^{\binom{n}{d-|\rho|}} Ce^{-\widehat{\eta}k n^{d-|\rho|}/2}
< Ce^{(\log(2)-\widehat{\eta}k/2)n^{d-|\rho|}}.
\end{aligned}
\end{equation}
Using an additional union bound over all $\rho\in X^j$ we obtain that 
\begin{equation}
\begin{aligned}
& P_{n,k}\left(\exists \rho\in X^{j},~\exists A \in C^{d-|\rho|-1}(X_\rho;\BF_2) \text{ such that } \|[A]\|_\rho\geq c \text{ and }|\delta_\rho A|\leq \varepsilon' ck\|[A]\|_\rho n^{d-|\rho|}\right)\\
 & \quad < \binom{n}{j+1}Ce^{(\log(2)-\widehat{\eta}k/2)n^{d-j-1}}< C\exp{\left((\log(2)-\widehat{\eta}k/2)n^{d-j-1}+(j+1)\log(n)\right)}.
\end{aligned}
\end{equation}
Recalling that $j\leq d-3$, by defining $k_0:= \lceil 2\log(2)/\widehat{\eta} \rceil+1$ the result follows. \hfill \qed

\vspace{1cm}

\begin{proof}[Proof of Lemma \ref{lem:long_estimate_lemma}]
Fix $1\leq i\leq k$ and $\Gamma\in \{0,1\}^{i-1}$. Under the event $(Z_j,W_j)=(\Gamma_j,1)$ for $1\leq j\leq i-1$ it holds that $\bigcup_{j=1}^{i-1}Y_j^\rho(T)$ is a set, satisfying $|F_{\rho,A}\setminus\bigcup_{j=1}^{i-1}Y_j^\rho(T)|\geq |F_{\rho,A}|/2$. Therefore, it is enough to show that for an appropriate choice of $\varepsilon'=\varepsilon'(d)>0$, conditioned on the event $F_{\rho,A}\setminus\bigcup_{j=1}^{i-1}Y_j^\rho(T)=B$ for some $B\subseteq F_{\rho,A}$ such that $|B|\geq |F_{\rho,A}|/2$, it holds that 
\begin{equation}
	P_{n,k}(|Y_i^\rho(T)\cap B|\leq 2\varepsilon' c\|[A]\|_\rho n^{d-|\rho|})\leq 3e^{-\widehat{\eta}n^{d-|\rho|}},
\end{equation}
where $\widehat{\eta}=\widehat{\eta}(d,c)>0$. Since $Y_i^\rho$ are i.i.d. it follows that the probability of the last event is the same for every $1\leq i\leq k$ and thus we can, without loss of generality assume that $i=1$. Abbreviate $Y_i^\rho(t)=Y^\rho(t)$ and $Y_i(t)=Y(t)$. For $1\leq t\leq T-1$, define the forbidden set of $(d-|\rho|)$-cells for $X_\rho$ at time $t$ by
\[
	\mathrm{Forbidden}(t)=\{\tau \in K_\rho^{d-|\rho|} ~:~ \exists \tau'\in Y(t-1) \text{ such that } \partial (\tau \cup \rho) \cap \partial \tau' \neq \emptyset\}.
\]
Note that the Forbidden cells at time $t$ are exactly those cells in $K_\rho^{d-|\rho|}$ whose union with $\rho$ is not legal to choose from in the greedy algorithm at time $t$. Also, for $0\leq j\leq |\rho|$ and $t \geq 0$, let $N_j(t)$ be the number of $d$-cells in $Y(t)$ whose intersection with $\rho$ is of size $j$. 

The proof of Lemma \ref{lem:long_estimate_lemma} is based on the following two claims:
\begin{claim}\label{clm:prop_cond_(a)_lemma_1}
	For every $t \geq 1$, we have 
	\begin{equation}
		|\mathrm{Forbidden}(t)|\leq (d+1)n N_{|\rho|}(t-1) + (d+1) N_{|\rho|-1}(t-1).
	\end{equation}
\end{claim}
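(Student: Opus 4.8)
The statement to prove is Claim~\ref{clm:prop_cond_(a)_lemma_1}, a bound on the number of forbidden $(d-|\rho|)$-cells in the link $X_\rho$ at step $t$ of the greedy algorithm.

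\medskip

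The plan is to bound $|\mathrm{Forbidden}(t)|$ by counting, for each previously chosen $d$-cell $\tau' \in Y(t-1)$, how many cells $\tau \in K_\rho^{d-|\rho|}$ it can forbid, and then noting that this count depends strongly on $|\tau' \cap \rho|$. Recall that $\tau$ is forbidden by $\tau'$ precisely when $\partial(\tau \cup \rho) \cap \partial \tau' \neq \emptyset$, i.e.\ when $\tau \cup \rho$ and $\tau'$ share a common $(d-1)$-face. Since $\tau \cup \rho$ and $\tau'$ are both $(d+1)$-element sets, they share a $(d-1)$-face exactly when $|(\tau \cup \rho) \cap \tau'| \geq d$, equivalently $|(\tau \cup \rho) \setminus \tau'| \leq 1$. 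So I would split into two cases according to whether $\tau$ and $\tau'$ share all of $\tau'$ except one vertex, or exactly one vertex of $\tau \cup \rho$ lies outside $\tau'$.

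\medskip

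First case: $\rho \subseteq \tau'$, i.e.\ $|\tau' \cap \rho| = |\rho|$. Writing $\sigma' := \tau' \setminus \rho \in K_\rho^{d-|\rho|}$, the condition $|(\tau \cup \rho) \setminus \tau'| \leq 1$ becomes $|\tau \setminus \sigma'| \leq 1$, so $\tau$ either equals $\sigma'$ or is obtained from $\sigma'$ by deleting one vertex and adding a new one from $[n] \setminus (\rho \cup \sigma')$. The number of such $\tau$ is at most $1 + (d-|\rho|+1)\cdot(n-|\rho|) \leq (d+1)n$ (the $+1$ for $\tau = \sigma'$ itself absorbed into the bound for $n$ large; one can be slightly more careful but $(d+1)n$ is a safe bound). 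Each such $\tau'$ contributes a term $N_{|\rho|}(t-1)$ counts these cells, giving the $(d+1)n\,N_{|\rho|}(t-1)$ term. Second case: $|\tau' \cap \rho| = |\rho| - 1$, so exactly one vertex $v \in \rho$ is missing from $\tau'$. Then $v \in (\tau \cup \rho) \setminus \tau'$ already, so since $|(\tau \cup \rho) \setminus \tau'| \leq 1$ we must have $(\tau \cup \rho) \setminus \tau' = \{v\}$, forcing $\tau \subseteq \tau' $ and in fact $\tau = \tau' \setminus (\rho \setminus \{v\})$ — i.e.\ $\tau$ is determined once $\tau'$ and the missing vertex are fixed; there are at most $d+1$ choices for which face of a fixed $\tau'$ plays this role, but actually $\tau$ is uniquely determined by $\tau'$ here, so each such $\tau'$ forbids at most $d+1$ cells in the worst case (allowing slack for the choice of $\tau$ among faces of $\tau'$ of the right size). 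This gives the $(d+1)N_{|\rho|-1}(t-1)$ term. Finally, if $|\tau' \cap \rho| \leq |\rho| - 2$, then $|(\tau \cup \rho)\setminus \tau'| \geq |\rho \setminus \tau'| \geq 2$, so no $\tau$ is forbidden by $\tau'$ at all — these cells contribute nothing.

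\medskip

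Summing the contributions over $\tau' \in Y(t-1)$ and grouping by $|\tau' \cap \rho|$ yields exactly $|\mathrm{Forbidden}(t)| \leq (d+1)n\,N_{|\rho|}(t-1) + (d+1)\,N_{|\rho|-1}(t-1)$, as claimed. The main thing to be careful about is the combinatorial bookkeeping in the two nontrivial cases: making sure that the intersection-size condition for sharing a $(d-1)$-face is correctly translated into a constraint on $\tau$ relative to $\tau'$ and $\rho$, and that one genuinely gets a factor $n$ only in the case $\rho \subseteq \tau'$ (where $\tau$ has a free vertex outside $\tau'$) and no factor of $n$ in the case $|\tau' \cap \rho| = |\rho| - 1$ (where $\tau$ is essentially pinned down). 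Everything else is a union bound over $Y(t-1)$.
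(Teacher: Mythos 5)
Your argument is correct and essentially the paper's own: you classify the previously chosen $d$-cells $\tau'$ by $|\tau'\cap\rho|$, show that only those with $|\tau'\cap\rho|\in\{|\rho|-1,|\rho|\}$ can forbid anything, and bound the two contributions by $(d+1)n$ and $d+1$ per cell, respectively. One small correction in the second case: $\tau$ is not uniquely determined by $\tau'$ --- the condition $(\tau\cup\rho)\setminus\tau'=\{v\}$ says exactly that $\tau$ is a $(d+1-|\rho|)$-element subset of $\tau'\setminus\rho$, of which there are $d+2-|\rho|\leq d+1$ --- but since you only invoke the bound $d+1$ (and likewise your crude $1+(d-|\rho|+1)(n-|\rho|)$ should be replaced by the exact count $1+(d+1-|\rho|)(n-d-1)\leq(d+1)n$), the stated inequality still follows.
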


Note that $N_{|\rho|}(t)$ are the number of $d$-cells containing $\rho$ at time $t$ and $ N_{|\rho|-1}(t)$ are the $d$-cells that contain all but one vertex of $\rho$ at time $t$.

\begin{claim}\label{clm:prop_cond_(a)_lemma_2}
	For every $0<\alpha< 1/(2(d+1)^{d+2})$, there exists $\eta=\eta(d,\alpha)>0$ such that for sufficiently large $n$
	\begin{equation}\label{eq:prop_cond_(a)_lemma_2}
		P_{n,1}\left(N_{|\rho|}(t)\leq \frac{4(d+1)^{d+1}}{n^{|\rho|}}t \text{ and } N_{|\rho|-1}(t)\leq \frac{4(d+1)^{d+2}}{n^{|\rho|-1}}t \text{ for all } \frac{\alpha}{2}n^d \leq t \leq \alpha n^d\right)>1-2e^{-\eta n^{d-|\rho|}}.
	\end{equation}
\end{claim}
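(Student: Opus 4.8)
The plan is to reduce, by monotonicity, to a one-sided concentration bound for the two random variables $N_{|\rho|}(\lfloor\alpha n^d\rfloor)$ and $N_{|\rho|-1}(\lfloor\alpha n^d\rfloor)$, and then to establish it with a Chernoff estimate for sums of indicators. Since $t\mapsto N_{|\rho|}(t)$ is non-decreasing while the threshold $\frac{4(d+1)^{d+1}}{n^{|\rho|}}t$ is increasing in $t$, the event that $N_{|\rho|}(t)>\frac{4(d+1)^{d+1}}{n^{|\rho|}}t$ for some integer $t\in[\frac{\alpha}{2}n^d,\alpha n^d]$ is contained in the event $\{N_{|\rho|}(\lfloor\alpha n^d\rfloor)>2(d+1)^{d+1}\alpha n^{d-|\rho|}\}$: at the left endpoint $t=\frac{\alpha}{2}n^d$ the threshold equals $2(d+1)^{d+1}\alpha n^{d-|\rho|}$, and $t\le\lfloor\alpha n^d\rfloor$ together with monotonicity carries any violation over to $t=\lfloor\alpha n^d\rfloor$. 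Likewise the corresponding event for $N_{|\rho|-1}$ is contained in $\{N_{|\rho|-1}(\lfloor\alpha n^d\rfloor)>2(d+1)^{d+2}\alpha n^{d-|\rho|+1}\}$. So it suffices to bound each of these two probabilities by $e^{-\eta n^{d-|\rho|}}$ for a suitable $\eta=\eta(d,\alpha)>0$, and sum.

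The key input is a lower bound on the number of legal cells that is uniform over $0\le t\le\alpha n^d$. Each of the (at most $t$) $d$-cells chosen so far has $d+1$ faces of dimension $d-1$, and each such $(d-1)$-face lies in at most $n-d$ members of $\binom{[n]}{d+1}$, so the forbidden set after $t$ steps has size at most $(d+1)(n-d)t$; hence for every $t\le\alpha n^d$ the number of legal cells is at least $\binom{n}{d+1}-(d+1)(n-d)\alpha n^d$. Using $\binom{n}{d+1}=\frac{n^{d+1}}{(d+1)!}(1+o(1))$ together with the hypothesis $\alpha<\frac{1}{2(d+1)^{d+2}}$, which forces $(d+1)\alpha<\frac{1}{2(d+1)^{d+1}}<\frac{1}{(d+1)!}$, this quantity is at least $\beta n^{d+1}$ for $n$ large and a constant $\beta=\beta(d,\alpha)>0$; in particular the greedy algorithm does not abort before step $\alpha n^d$. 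Consequently, conditionally on $Y(s-1)$, the probability that the $s$-th greedily chosen cell ($s\le\alpha n^d$) contains $\rho$ is at most $\binom{n-|\rho|}{d+1-|\rho|}/(\beta n^{d+1})\le p:=\frac{1}{\beta n^{|\rho|}}$, and the probability it meets $\rho$ in exactly $|\rho|-1$ vertices is at most $|\rho|\binom{n-|\rho|}{d+2-|\rho|}/(\beta n^{d+1})\le p':=\frac{d+1}{\beta n^{|\rho|-1}}$. (The cases $|\rho|\le1$ are degenerate: then $N_{|\rho|}(t)=t<4(d+1)^{d+1}t$ when $|\rho|=0$, and $N_{|\rho|-1}(t)\le t<4(d+1)^{d+2}t$ when $|\rho|\le1$, so those parts hold deterministically; otherwise $p,p'<1$ for $n$ large.)

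With these per-step bounds, $N_{|\rho|}(\lfloor\alpha n^d\rfloor)$ is a sum of $\{0,1\}$-valued variables $\xi_s$ with $P(\xi_s=1\mid Y(s-1))\le p$; by iterated conditioning its moment generating function satisfies $E[e^{\lambda\sum_s\xi_s}]\le e^{\mu(e^{\lambda}-1)}$ with $\mu:=\lfloor\alpha n^d\rfloor\,p\le\frac{\alpha}{\beta}n^{d-|\rho|}$, equivalently $N_{|\rho|}(\lfloor\alpha n^d\rfloor)$ is stochastically dominated by $\mathrm{Bin}(\lfloor\alpha n^d\rfloor,p)$. For the target $a:=2(d+1)^{d+1}\alpha n^{d-|\rho|}$ one gets $a/\mu\ge 2(d+1)^{d+1}\beta$, and since the constraint on $\alpha$ makes $\beta$ comparable to $\frac{1}{(d+1)!}$ while $\frac{2(d+1)^{d+1}}{(d+1)!}\ge 9$ for every $d\ge2$, this ratio can be kept bounded away from $1$ (say $\ge4>e$) for $n$ large. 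The Chernoff bound then gives $P(N_{|\rho|}(\lfloor\alpha n^d\rfloor)\ge a)\le(e\mu/a)^{a}\le(e/4)^{a}\le e^{-\eta_1 n^{d-|\rho|}}$ with $\eta_1=\eta_1(d,\alpha)>0$. The identical computation with $a':=2(d+1)^{d+2}\alpha n^{d-|\rho|+1}$ and $\mu'\le\frac{(d+1)\alpha}{\beta}n^{d-|\rho|+1}$ (so again $a'/\mu'\ge 2(d+1)^{d+1}\beta$) yields $P(N_{|\rho|-1}(\lfloor\alpha n^d\rfloor)\ge a')\le e^{-\eta_2 n^{d-|\rho|+1}}\le e^{-\eta_2 n^{d-|\rho|}}$. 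Setting $\eta:=\min(\eta_1,\eta_2)$ and adding the two bounds finishes the proof.

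Conceptually there is no real obstacle — the argument is a first-moment/Chernoff computation. The delicate point is the bookkeeping of constants: one must check that $\alpha<\frac{1}{2(d+1)^{d+2}}$ is simultaneously strong enough (i) to keep the number of legal cells a fixed positive fraction $\beta$ of $\binom{n}{d+1}$ throughout $[0,\alpha n^d]$, and (ii) to make the ratio of the claimed linear threshold to the mean, which equals $2(d+1)^{d+1}\beta$, bounded strictly above $e$, so that the Chernoff tail genuinely decays like $e^{-\Theta(n^{d-|\rho|})}$ rather than only polynomially. The constants $4(d+1)^{d+1}$ and $4(d+1)^{d+2}$ in the statement, together with the factor $2$ recovered by restricting to $t\ge\frac{\alpha}{2}n^d$, are calibrated exactly so that (i) and (ii) both hold. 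A minor secondary point is to record cleanly the conditional Chernoff / stochastic-domination lemma for the non-independent $\xi_s$.
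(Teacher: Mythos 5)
Your proof is correct and follows essentially the same route as the paper: bound the forbidden set to get a per-step conditional probability of order $n^{-|\rho|}$ (resp.\ $n^{1-|\rho|}$) for choosing a cell containing $\rho$ (resp.\ meeting $\rho$ in $|\rho|-1$ vertices), then apply a Chernoff-type bound; your constant bookkeeping (the ratio $2(d+1)^{d+1}\beta>e$) checks out. The only difference is cosmetic: you use monotonicity of $N_\cdot(t)$ to reduce to a single Chernoff bound at time $\lfloor\alpha n^d\rfloor$, whereas the paper applies Chernoff at each $t$ in the window and takes a union bound over $t$, and you are somewhat more explicit about the conditional-domination step and the degenerate cases $|\rho|\leq 1$.
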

We postpone the proof of both claims and turn to complete the proof of Lemma  \ref{lem:long_estimate_lemma}. For every $1\leq t\leq T$, the probability to choose a $d$-cell that belongs to the set $B$ in the $t$-th step is at least 
\begin{equation}\label{eq:prob_to_choose_from_F}
	\frac{|B|-|\mathrm{Forbidden}(t)|}{\binom{n}{d+1}}\geq 	\frac{\frac{1}{2}\|[A]\|_\rho\binom{n-|\rho|}{d-|\rho|+1}-(d+1)n N_{|\rho|}(t-1) - (d+1) N_{|\rho|-1}(t-1)}{\binom{n}{d+1}},
\end{equation}
where for the inequality we used the lower bound $|B|\geq |F_{\rho,A}|/2\geq \|[A]\|_\rho \binom{n-|\rho|}{d-|\rho|+1}/2$ (see \eqref{eq:complete_complex_expansion} and Claim \ref{clm:prop_cond_(a)_lemma_1}). Consequently, by Claim \ref{clm:prop_cond_(a)_lemma_2}, for every $\alpha<1/(2(d+1)^{d+2})$ one can find $\eta=\eta(d,\alpha)>0$ such that with probability at least $1-2e^{-\eta n^{d-|\rho|}}$ for every $\alpha n^d/2 \leq t\leq \alpha n^d$, it holds that 
\begin{equation}
	\eqref{eq:prob_to_choose_from_F} \geq \binom{n}{d+1}^{-1}n^{d+1-|\rho|}\left(\frac{\|[A]\|_\rho}{2(2(d+1))^{d+1}}-8(d+1)^{d+3}\alpha\right).
\end{equation}
Taking $\alpha = T/(2n^d)$ we can bound the last term from below by 
\begin{equation}\label{eq:ugly_probability}
	\mathfrak{p}:=\frac{\|[A]\|_\rho}{4(2(d+1))^{d+1}}n^{-|\rho|}.
\end{equation}

Consequently, under the event in \eqref{eq:prop_cond_(a)_lemma_2}, the probability to choose an element from $B$ in each of the steps between time $T/4$ and $T/2$ is at least $\mathfrak{p}$. 

In particular, with $\{\chi_t\}_{1\leq t\leq T}$ denoting independent random variables distributed under $P_{n,1}$ as Bernoulli($\mathfrak{p}$) and  $\mathfrak{B}$ denoting the event in \eqref{eq:prop_cond_(a)_lemma_2}, it follows from Chernoff's bound that for some $\eta'=\eta'(d,c)>0$
\begin{equation}
\begin{aligned}
	& P_{n,1}\left(|Y^\rho(T)\cap B|< \mathfrak{p}T/8\right)  \leq P_{n,1}\left(|Y^\rho(T)\cap B|< \mathfrak{p}T/8, \FB\right)+P_{n,1}(\FB^c)\\
	&\qquad   \leq P_{n,1}\left(\sum_{t=\lceil T/4\rceil}^{\lfloor T/2\rfloor}\chi_t <\frac{\mathfrak{p}T}{8}\right)+P_{n,1}(\FB^c)
	\leq e^{-\eta' n^{d-|\rho|}} + P_{n,1}(\FB^c)\\
	&\qquad \leq e^{-\eta' n^{d-|\rho|}} + 2e^{-\eta n^{d-|\rho|}}\leq 3e^{-\widehat{\eta}n^{d-|\rho|}},
\end{aligned}
\end{equation}
where $\widehat{\eta}:=\min\{\eta,\eta'\}$ and for the one before last inequality we used Claim \ref{clm:prop_cond_(a)_lemma_1}. 

Noting that $\mathfrak{p}T/4 \geq 2\varepsilon' c\|[A]\|_\rho n^{d-|\rho|}$ for some $\varepsilon'=\varepsilon'(d)>0$, the result follows. 
\end{proof}

\begin{proof}[Proof of Claim \ref{clm:prop_cond_(a)_lemma_1}]
	Let $\tau' \in Y(t-1)$. If $|\tau' \cap \rho|<|\rho|-1$, then for every $\sigma\in \partial\tau'$ we have $|\sigma\cap \rho|<|\rho|-1$. However, for every $\tau \in K_{\rho}^{d-|\rho|}$ and every $\sigma\in \partial (\tau\cup\rho)$ it holds that $|\sigma\cap\rho|\geq |\rho|-1$. Thus $\partial\tau' \cap \partial(\tau\cup\rho)=\emptyset$. That is, the only $d$-cells in $Y(t-1)$ that may add cells to $\mathrm{Forbidden}(t)$ are $\tau'\in Y(t-1)$ such that $|\tau'\cap \rho|\in \{|\rho|-1,|\rho|\}$. Assuming that $\tau'\in Y(t-1)$ satisfies $|\tau'\cap\rho|=|\rho|$, since each of the $(d+1)$ boundary elements in $\partial \tau'$ belongs to no more than $n$ different $d$-cells, it follows that any such $d$-cell $\tau'$ can add to $\mathrm{Forbidden}(t)$ at most $(d+1)n$ elements. Similarly, if $\tau'\in Y(t-1)$ satisfies $|\tau'\cap\rho|=|\rho|-1$, then each cell $\sigma \in \partial \tau'$ such that $|\sigma \cap \rho|=|\rho|-1$ can contribute at most one cell to $\mathrm{Forbidden}(t)$, that is, the one obtained by adding to $\sigma$ the missing vertex from $\rho$. Furthermore each cell $\sigma\in\partial \tau'$ such that $|\sigma\cap\rho|<|\rho|-1$ does not contribute to $\mathrm{Forbidden}(t)$ at all. Because there are no more than $d+1$ elements in $\partial \tau'$ the result follows. 
\end{proof}

\begin{proof}[Proof of Claim \ref{clm:prop_cond_(a)_lemma_2}]
	Observe that in each step of the process, the choice of a $d$-cell can make at most $(d+1)\cdot (n-d-1)+1\leq (d+1)n$ additional $d$-cells not legal for the following steps. Consequently, the number of non-legal $d$-cells at time $t$ is at most $n(d+1)t$. Thus, the probability to choose a $d$-cell in the $t$-th step that contains $\rho$ is at most 
\begin{equation}
	\frac{\binom{n-|\rho|}{d+1-|\rho|}}{\binom{n}{d+1}-n(d+1)t}, 
\end{equation}
	which for $t\leq \alpha n^d < n^d/(2(d+1)^{d+2}) $ is at most $2(d+1)^{d+1}n^{-|\rho|}$.
Therefore, by a Chernoff bound argument together with a union bound
\begin{equation}\label{eq:Chernoff_1}
\begin{aligned}
& P_{n,1}\left(\exists t ~:~ \text{such that }\frac{\alpha}{2}n^{d}\leq t\leq\alpha n^{d} \text{ and }N_{|\rho|}(t)>\frac{4(d+1)^{d+1}}{n^{|\rho|}}t\right)\\
 \leq &\sum_{t=\lfloor\frac{\alpha}{2}n^{d}\rfloor}^{\lceil\alpha n^{d}\rceil}P_{n,1}\left(N_{|\rho|}(t)>\frac{4(d+1)^{d+1}}{n^{|\rho|}}t\right)
\leq\sum_{t=\lfloor\frac{\alpha}{2}n^{d}\rfloor}^{\lceil\alpha n^{d}\rceil}e^{-\xi't/n^{|\rho|}}\leq e^{-\xi'n^{d-|\rho|}},
\end{aligned}
\end{equation}
for some $\xi'$ that only depends on $\alpha$ and $d$, and sufficiently large $n$. 

Similarly, the probability to choose a $d$-cell in the $t$-th step that contains exactly $|\rho|-1$ of the vertices of $\rho$ is at most 
\begin{equation}
	\frac{|\rho|\binom{n-|\rho|}{d+2-|\rho|}}{\binom{n}{d+1}-n(d+1)t},
\end{equation}
which for $t \leq \alpha n^d <n^d/(2(d+1)^{d+2}) $ is at most $2|\rho|(d+1)^{d+1}n^{1-|\rho|}\leq 2(d+1)^{d+2}n^{1-|\rho|}$. Therefore by the Chernoff bound 
\begin{equation}\label{eq:Chernoff_2}
P_{n,1}\left(\exists t ~:~ \text{such that }\frac{\alpha}{2}n^{d}\leq t\leq\alpha n^{d} \text{ and }N_{|\rho|-1}(t)>\frac{4(d+1)^{d+2}}{n^{|\rho|-1}}t\right)\leq e^{-\xi'' n^{d-|\rho|+1}},
\end{equation}
for some constant $\xi''$ that depends only on $\alpha$ and $d$ and sufficiently large $n$. 

Combining \eqref{eq:Chernoff_1} and \eqref{eq:Chernoff_2} we get the result with $\eta=\min\{\xi',\xi''\}$.
\end{proof}

\section{Concluding remarks and open questions}

\paragraph{Coboundary expanders without Keevash's construction.} As one can see from the proof of Proposition \ref{prop:condition(a)}, Keevash's algorithm is not really necessary and it is sufficient to consider the $d$-cells from the greedy stage. We choose to use Steiner systems (and thus Keevash's algorithm) since they induce the union of $k$ independent, uniformly chosen perfect matching on the links of $(d-2)$-cells, and these are good spectral expanders by a well known result. It should be possible to show that with high probability the resulting $1$ skeletons obtained by the greedy algorithm (which yields almost perfect matchings) are good spectral expander as well. If this is indeed the case, then one can apply Theorem \ref{thm:strategy} to show that the union of $k$ independent families of $d$-cells obtained by the greedy algorithm are good coboundary expanders as well, without relying at all on Keevash's work. 

\paragraph{Alternative definitions of high-dimensional expansion.} As mentioned in the introduction there are several competing definitions for high-dimensional expansion. Without going into details, our model yields expanders with respect to toplogical expansion (see \cite{Gr10,DKW15}), spectral expansion (c.f. \cite{Eck44,Gar73,GW14,KR15}) as well as the Cheeger type expansion defined in \cite{PRT12,Par13}.

\paragraph{Coboundary expanders whose vertices have bounded degree.} It is a natural question whether one can construct $d$-complexes all of whose cells have bounded degrees and which are coboundary expanders. Such complexes would of course not have complete skeletons. An interesting open question is to have a random model of $d$-complexes all of whose cells are of bounded degree which are coboundary expanders, or at least topological expanders. The random model described in \cite{FGLNP10} gives random $d$-complexes all of whose cells are of bounded degree which are geometric expanders, but are not topological expanders. 

\paragraph{Minimal degree for coboundary expansion.} It would be interesting to obtain estimates on the value of $k_0=k_0(d)$ for which the theorem holds. 


\bibliography{Biblio}
\bibliographystyle{alpha}

\medskip{}

$~$\\
Institute of Mathematics, Hebrew University\\
Jerusalem 91904, \\
Israel.\\
E-mail: alex.lubotzky@mail.huji.ac.il

\bigskip{}
$~$\\
Institute of Theoretical Studies,\\
ETH Z{\"u}rich, 
CH-8092 Z{\"u}rich,\\ 
Switzerland.\\
E-mail: zluria@gmail.com

\bigskip{}
$~$\\
Departement Mathematik,\\
ETH Z{\"u}rich, CH-8092 Z{\"u}rich, \\
Switzerland.\\
E-mail: ron.rosenthal@math.ethz.ch

\end{document}